\let\expandafter\oldproof\csname\string\proof\endcsname
\let\oldendproof\endproof
\renewenvironment{proof}[1][\proofname]{%
	\oldproof[\bf #1]%
}{\oldendproof}
\theoremstyle{plain}
\newtheorem{theorem}{Theorem}
\newtheorem{lemma}{Lemma}[section]
\newtheorem{claim}[lemma]{Claim}
\newtheorem{proposition}[lemma]{Proposition}
\newtheorem*{claim*}{Main Claim}
\newcommand{\poly}{\text{poly}}
\definecolor{RED}{rgb}{1,0,0}\definecolor{BLUE}{rgb}{0,0,1} 
\begin{document}

\title{A Characterization of Easily Testable Induced Digraphs and $k$-Colored Graphs}

\author{Lior Gishboliner\thanks{ETH Zurich. 
		Email: lior.gishboliner@math.ethz.ch.}
}

\maketitle

\begin{abstract}
	We complete the characterization of the digraphs $D$ for which the induced $D$-removal lemma has polynomial bounds, answering a question of Alon and Shapira. We also study the analogous problem for $k$-colored complete graphs. In particular, we prove a removal lemma with polynomial bounds for Gallai colorings. 
\end{abstract}

\section{Introduction}
In this paper we are concerned with binary combinatorial objects, such as graphs, digraphs and $k$-colored graphs. A removal lemma is a statement of the following form: Suppose that $G,F$ are binary combinatorial objects of the same type, where we think of $G$ as large and of $F$ as small and fixed. For every $\varepsilon > 0$ there is $\delta = \delta(\varepsilon) > 0$, such that if $G$ contains at most $\delta n^{v(F)}$ (induced) copies of $F$, then $G$ can be made (induced) $F$-free by changing at most $\varepsilon n^2$ entries in its adjacency matrix, where $n = v(G)$. 

The first result of this type was the famous {\em triangle removal lemma} of Ruzsa and Szemer\'edi \cite{RuzsaSz76}, which states that if an $n$-vertex graph contains at most $\delta(\varepsilon)n^3$ triangles, then it can be made triangle-free by deleting at most $\varepsilon n^2$ edges. This result played a key role in the development of extremal combinatorics, and its proof is one of the first applications of the celebrated Szemer\'edi regularity lemma \cite{Szemeredi}. The original proof generalizes from triangles to arbitrary graphs, giving the {\em graph removal lemma}. Later, Alon, Fischer, Krivelevich and Szegedy \cite{AFKS} proved an analogous result for induced subgraphs, the so-called {\em induced removal lemma}. This result states that if a graph contains at most $\delta(\varepsilon)n^{v(F)}$ induced copies of $F$, then it can be made induced $F$-free by adding/deleting at most $\varepsilon n^2$ edges. Analogous results have later been proved for other combinatorial structures, such as digraphs \cite{AS_digraphs} and ordered graphs \cite{ABF}. In another direction, the induced removal lemma was generalized to arbitrary hereditary graph properties \cite{AS_hereditary}. 

A common feature of all of the above results is that their proof uses Szemer\'edi's regularity lemma or a generalization thereof. Consequently, these proofs give quite weak, tower-type (or worse) bounds on $\delta(\varepsilon)$. For example, in the case of the graph removal lemma, the best known bound \cite{Fox} is 
$1/\delta \leq \text{tower}(O(\log 1/\varepsilon))$, where $\text{tower}(x)$ is a tower of $x$ exponents. For the induced removal lemma and for other structures (e.g. ordered graphs), the best known general bounds are even worse, see e.g. \cite{CF} for the state of the art. However, for particular graphs $F$, better bounds are known. This has raised the natural question of characterizing the cases where one can prove a removal lemma with polynomial bounds, i.e. when $1/\delta$ can be taken to be polynomial in $1/\varepsilon$.  
The first result of this type was obtained by Alon \cite{Alon}, who proved that for a graph $F$, the $F$-removal lemma has polynomial bounds if and only if $F$ is bipartite. Later, Alon and Shapira \cite{AS_induced} obtained a nearly complete characterization for the induced case, showing that the induced $F$-removal lemma has polynomial bounds if $F \in \{P_2,P_3,\overline{P_2},\overline{P_3}\}$, and that it does not have polynomial bounds if $F \notin \{P_2,P_3,\overline{P_2},\overline{P_3},P_4,C_4,\overline{C_4}\}$, where $P_k$ and $C_k$ are the path and cycle with $k$ vertices, respectively, and $\overline{F}$ denotes graph complement. The case of $P_4$ was later settled by Alon and Fox \cite{AF}. The author and Shapira \cite{GS_C4} proved an exponential bound for the case of $C_4$. For similar results for certain families of graph properties, see \cite{GS_poly}. 

Similar characterizations of polynomial removal lemmas were also obtained for other combinatorial structures, e.g. for tournaments \cite{FGSY} and for digraphs \cite{AS_digraphs}. In particular, Alon and Shapira \cite{AS_digraphs} characterized the digraphs $D$ for which the $D$-removal lemma has polynomial bounds, and asked for a characterization in the induced case. They showed \cite{AS_induced} that the induced $D$-removal lemma does not have polynomial bounds whenever $v(D) \geq 5$. Here we answer their question by completing the characterization. Before stating our results, let us introduce the following commonly used terminology: 
for a graph/digraph $F$, we say that (induced) $F$-freeness is {\em easily testable} if the (induced) $F$-removal lemma has polynomial bounds, and otherwise we say that it is {\em hard to test} (or just {\em hard}). This terminology comes from the field of property testing, where the goal is to design fast algorithms which distinguish between graphs satisfying a certain property and graphs which are $\varepsilon$-far from the property. The efficiency of such testers is measured by the number of queries they make to the input graph, and for many properties one can design testers whose query complexity is independent of the size of the input, i.e. depends only on $\varepsilon$. For hereditary graph properties, the query complexity of the best (one-sided error) tester is essentially given by the function $\delta(\varepsilon)$ in the corresponding removal lemma. We refer the reader to the book \cite{Goldreich} for an introduction to property testing.  


The following theorem gives the characterization of digraphs $D$ for which the induced $D$-removal lemma has polynomial bounds.

\begin{theorem}\label{thm:digraphs}
	For a digraph $D$, induced $D$-freeness is easily testable if and only if $v(D) = 2$. 
\end{theorem}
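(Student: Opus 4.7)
For the ``if'' direction, assume $v(D)=2$. An induced copy of $D$ in $G$ is simply an ordered pair $(u,v)$ of distinct vertices whose induced sub-digraph on $\{u,v\}$ is isomorphic to $D$. If $G$ contains at most $\delta n^2$ induced copies of $D$, we can destroy all of them by altering the (at most two) adjacency-matrix entries of each such pair, changing at most $2\delta n^2$ entries in total. Thus $\delta=\varepsilon/2$ suffices, giving a linear (in particular polynomial) dependence of $1/\delta$ on $1/\varepsilon$.

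For the ``only if'' direction, the case $v(D)\geq 5$ is the result of Alon and Shapira cited in the introduction, so the remaining task is $v(D)\in\{3,4\}$. The plan is to construct, for each such $D$, a sequence of $n$-vertex digraphs $G_n$ in which both the relative number of induced copies of $D$ and the relative edit distance to induced $D$-freeness tend to zero, but the former tends to zero much faster than any polynomial in the latter, thereby precluding a polynomial dependence $1/\delta\leq (1/\varepsilon)^c$. The engine is the Behrend set $B\subset[N]$ of size $N/\exp(O(\sqrt{\log N}))$ containing no three-term arithmetic progression, deployed via a colored tripartite ``blow-up'' gadget in which the triples of vertices realizing an induced $D$ are in near-bijection with the three-term progressions in $B$. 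This is the Ruzsa--Szemer\'edi paradigm giving the super-polynomial lower bound for the triangle removal lemma, adapted to the digraph setting.

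The main obstacle is the number of non-isomorphic digraphs on $3$ and $4$ vertices, each nominally demanding its own construction. To keep the case analysis tractable, I would first factor out the natural symmetries of the problem: reversing every arc, and swapping ``no arc'' with ``both arcs'' on any fixed pair while preserving single arcs, both manifestly preserve polynomial testability. After modding out, one is left with only a short list of canonical $3$-vertex digraphs, and for each I would build a Behrend-based gadget tuned to that pattern. For $v(D)=4$, I expect each hard case to follow by taking a $3$-vertex hard construction for an induced sub-digraph $D'\subset D$ and attaching a fourth vertex with a carefully chosen uniform adjacency pattern, so that the $4$-vertex pattern $D$ appears precisely on triples realizing $D'$. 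The delicate part in each case is to verify simultaneously that induced $D$-copies are rare and that the edit distance to induced $D$-freeness is $\Omega(n^2)$ up to a sub-polynomial factor, which typically reduces to a ``unique three-term progression'' lemma on $B$.
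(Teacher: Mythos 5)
Your ``if'' direction is fine: when $v(D)=2$, each induced copy of $D$ occupies a single pair, and destroying it by rewriting that pair's two adjacency-matrix entries cannot create a copy on any other pair, so $\delta=\varepsilon/2$ works. This matches the paper's (unwritten, ``trivial'') observation.

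For the ``only if'' direction, the overall Behrend/Ruzsa--Szemer\'edi plan is right, but the route you sketch has two genuine gaps. First, the symmetries you propose to mod out by --- reversing all arcs, and globally complementing the ``undirected part'' (no-arc $\leftrightarrow$ double-arc) --- do not come close to collapsing the case analysis. The paper's key move is a many-to-one \emph{reduction}, not a symmetry: map $D$ to the $3$-colored complete graph $C(D)$ by forgetting the orientation of single arcs, and observe (Proposition~\ref{prop:coloring}) that any hardness construction for $C(D)$-freeness in $3$-colored graphs lifts to one for induced $D$-freeness in digraphs. With this in hand, a single uniform Behrend-type lemma (Lemma~\ref{lem:RS_triangle}) disposes of every $3$-colored $F$ that contains a triangle avoiding one of the three colors; that covers all $F$ on $\geq 3$ vertices except the rainbow triangle ($v=3$) and the graph $F_4$ whose color classes are three perfect matchings ($v=4$). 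In particular, your proposed strategy for $v(D)=4$ --- take a hardness construction for a $3$-vertex induced sub-digraph $D'\subset D$ and ``attach a fourth vertex with a uniform adjacency pattern'' --- is not justified and is not what one should do: a single attached vertex destroys pair-disjointness (all enlarged copies would share it), and if you attach a full blow-up class you must re-verify both the copy count and the edit distance from scratch, which is exactly the work the uniform lemma already does for you. The paper treats $v(D)=4$ directly via the $3$-colored lemma and a separate Behrend lemma for $F_4$.

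Second, and more importantly, you miss the one genuinely delicate case, which is precisely where the above reduction breaks. When $C(D)$ is the rainbow triangle (there are two such $D$ on $3$ vertices, obtained from each other by reversing all arcs), the $3$-colored reduction is useless because the rainbow triangle is \emph{easily testable} as a $3$-colored complete graph --- this is the main positive result of the paper (Theorem~\ref{thm:Gallai_coloring}, via Gallai's structure theorem). One therefore needs a bespoke digraph construction exploiting the orientation information that $C(\cdot)$ forgets; the paper builds such a Behrend construction for $D_3$ directly. Your sketch shows no awareness that these two digraphs require separate treatment, and a proof that does not single them out is incomplete.
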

\noindent
Observe that the ``if'' part of Theorem \ref{thm:digraphs} is trivial.

Induced digraphs can encode $3$-colored complete graphs, where the color of a pair $\{i,j\}$ is the number of directed edges between $i$ and $j$, namely $0$, $1$ or $2$ (as in \cite{AS_digraphs}, we allow anti-parallel edges, but not parallel edges).  
By {\em $k$-colored complete graph} we mean a coloring of the edges of a complete graph with $k$ colors. So in particular, a graph can be thought of as a $2$-colored complete graph.
The removal lemma generalizes in a straightforward manner to $k$-colored complete graphs (where instead of edge addition/deletion, one speaks of edge color changes).

Now, given a digraph $D$, let $C(D)$ denote the corresponding $3$-colored complete graph; namely, $C(D)$ has the same vertex-set as $D$, and the color of a pair $\{i,j\}$ is the number of edges in $D$ between $i$ and $j$ (either $0$, $1$ or $2$). Note that the map $D \mapsto C(D)$ is not one-to-one. Indeed, if $D'$ is obtained from $D$ by reversing the direction of some single edges (i.e. edges $(i,j) \in E(D)$ for which $(j,i) \notin E(D)$), then $C(D') = C(D)$. 

Two subgraphs of a graph/digraph/$k$-colored graph are called {\em pair-disjoint} if they share at most one vertex. Throughout the paper, we will use the obvious fact that if a graph/digraph/$k$-colored graph $G$ contains $\varepsilon n^2$ pair-disjoint (induced) copies of $F$, then one must add/delete/change the color of at least $\varepsilon n^2$ edges in order to make $G$ (induced) $F$-free. By a 
{\em hardness construction} for (induced) $F$-freeness, we mean a graph $G$ which contains a collection of $\varepsilon n^2$ (induced) pair-disjoint copies of $F$, but only $\delta n^{v(F)}$ (induced) copies of $F$ overall, where $\delta \ll \poly(\varepsilon)$ (namely, $\delta$ goes to $0$ faster than any polynomial in $\varepsilon$).
So a hardness construction (for every $\varepsilon$ and $n$) shows that (induced) $F$-freeness is hard to test.  
The following (almost immediate) proposition shows that for a digraph $D$, a hardness construction for $C(D)$-freeness implies a hardness construction for induced $D$-freeness.
\begin{proposition}\label{prop:coloring}
	Let $D$ be a digraph. For $\varepsilon,\delta > 0$ and $n \geq 1$, suppose that there is a $3$-colored complete graph $G$ on $n$ vertices which contains $\varepsilon n^2$ pair-disjoint copies of $C(D)$, but only $\delta n^{v(D)}$ copies of $C(D)$ overall. Then there is a digraph $G'$ on $n$ vertices which contains $\varepsilon n^2$ induced pair-disjoint copies of $D$, but only $\delta n^{v(D)}$ induced copies of $D$ overall. 
\end{proposition}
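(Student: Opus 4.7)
The plan is to build $G'$ from $G$ by orienting edges according to their colors. Pairs of color $0$ in $G$ will become non-edges in $G'$, pairs of color $2$ will become anti-parallel pairs of directed edges, and pairs of color $1$ will become single directed edges, whose direction we are free to choose. This will ensure $C(G')=G$, so that every induced copy of $D$ in $G'$ automatically yields a copy of $C(D)$ in $G$ (on the same vertex set, via the same bijection). The design freedom in orienting the color-$1$ pairs is what allows us to realize the $\varepsilon n^2$ pair-disjoint copies of $C(D)$ as genuine induced copies of $D$.

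Concretely, let $F_1, \dots, F_m$ with $m \geq \varepsilon n^2$ denote the pair-disjoint copies of $C(D)$ in $G$, and for each $i$ fix an isomorphism $\phi_i: C(D) \to F_i$. I will orient each color-$1$ edge $\{\phi_i(u), \phi_i(v)\}$ of $F_i$ so that it points from $\phi_i(u)$ to $\phi_i(v)$ precisely when $(u,v) \in E(D)$ (note that by the definition of $C(D)$, color $1$ on $\{u,v\}$ corresponds to exactly one of $(u,v),(v,u)$ lying in $E(D)$, so this unambiguously prescribes an orientation). Since the $F_i$'s are pair-disjoint, they share no pair, so these orientation choices do not conflict across different $F_i$'s. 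I then orient all remaining color-$1$ pairs of $G$ arbitrarily to obtain $G'$.

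By construction, $C(G')=G$, and for each $i$ the map $\phi_i: V(D) \to V(F_i)$ is an induced embedding of $D$ into $G'$: color $0$, $1$, and $2$ pairs of $C(D)$ map to non-edges, single directed edges with the prescribed orientation, and anti-parallel edge pairs, respectively. Thus $G'$ contains $m \geq \varepsilon n^2$ induced copies of $D$, and they remain pair-disjoint since the $F_i$'s were. For the upper bound, observe that any induced copy of $D$ in $G'$ projects, by forgetting the orientations of single edges, to a copy of $C(D)$ in $C(G')=G$ on the same labeled vertex set, and this projection is injective on labeled copies. Hence the number of induced copies of $D$ in $G'$ is at most the number of copies of $C(D)$ in $G$, which is $\delta n^{v(D)}$.

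There is no real obstacle here; the only point to verify is that the edge orientations chosen inside different $F_i$'s do not clash, which is immediate from the pair-disjointness assumption. The proposition is essentially a bookkeeping observation that a hardness construction in the coarser colored-graph setting lifts, edge by edge, to the finer digraph setting without losing or creating copies.
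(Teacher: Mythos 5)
Your proof is correct and follows exactly the same approach as the paper's (one-line) proof: choose $G'$ with $C(G')=G$, using pair-disjointness to orient color-$1$ edges so each given copy of $C(D)$ becomes an induced copy of $D$. You have simply spelled out the details that the paper leaves implicit.
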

	
\begin{proof}
	Choose $G'$ such that $C(G') = G$, and such that each of the $\varepsilon n^2$ pair-disjoint copies of $C(D)$ in $G$ makes an induced copy of $D$ in $G'$. 
\end{proof}	

Proposition \ref{prop:coloring} suggests the problem of characterizing the easily testable $3$-colored complete graphs. It turns out that here the situation is somewhat different from that of induced digraphs: while all induced digraphs on at least $3$ vertices are hard (by Theorem \ref{thm:digraphs}), there is a $3$-colored complete graph on $3$ vertices which is easily testable, namely the rainbow triangle. 
This assertion is the main part of our next result, which characterizes the easily testable $3$-colored complete graphs:
\begin{theorem}\label{thm:3-colored_graphs}
	Let $F$ be a $3$-colored complete graph. Then $F$-freeness is easily testable if and only if $v(F) = 2$ or $F$ is the rainbow triangle. 
\end{theorem}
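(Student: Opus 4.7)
I plan to prove Theorem \ref{thm:3-colored_graphs} by splitting into the easy direction ($v(F) = 2$ or $F$ is the rainbow triangle) and the hard direction (all other $F$).

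\textbf{Easy direction.} The $v(F) = 2$ case is trivial. For the rainbow triangle, what is needed is a polynomial removal lemma for Gallai colorings --- the statement that a $3$-coloring of $K_n$ with few rainbow triangles is close to one without. My plan is to invoke Gallai's structure theorem: every Gallai coloring admits a non-trivial vertex partition with at most two colors appearing between parts, and the reduced coloring on the partition is again Gallai. Starting from a $3$-coloring with at most $\delta n^3$ rainbow triangles, I would approximate a Gallai partition via a majority vote on edge colors, correct the $\poly(\varepsilon) n^2$ offending edges, and recurse on each block. Polynomial bounds should follow because each recursive level destroys a constant fraction of the rainbow triangles at polynomial cost, with $O(\log(1/\varepsilon))$ levels total.

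\textbf{Hard direction, $v(F) = 3$.} If $F$ is monochromatic of color $c$, take the Behrend--Ruzsa--Szemer\'edi graph $H$ (with $\varepsilon n^2$ edge-disjoint triangles and no others), color edges of $H$ by $c$ and the rest of $K_n$ by a different color; the $c$-monochromatic triangles are exactly the Behrend--RS triangles. If $F$ has color pattern $(c, c, c')$ with $c \neq c'$, use the tripartite Behrend construction: parts $V_1, V_2, V_3 = \{i\} \times [m]$, an AP-free $A \subseteq [m]$, color $V_1V_2$- and $V_2V_3$-edges by $c$ iff the index difference lies in $A$, color $V_1V_3$-edges by $c'$ iff the difference lies in $2A$, and everything else (intra-part and non-Behrend inter-part) by a third color. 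AP-freeness ensures the $(c,c,c')$-triangles are exactly the $|A| \cdot m$ pair-disjoint Behrend triples $(x, x+a, x+2a)$, yielding the hardness bound.

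\textbf{Hard direction, $v(F) \geq 4$.} A pigeonhole on any vertex's incident edges shows that when $v(F) \geq 5$, $F$ contains a non-rainbow sub-triangle; and when $v(F) = 4$, the only way all four sub-triangles are rainbow is for $F$ to be the ``affine'' $\mathbb{F}_2^2$-colored $K_4$, whose three color classes are perfect matchings. When $F$ contains a non-rainbow sub-triangle $T$ on vertices $\{i, j, k\}$, I would take the corresponding monochromatic or tripartite Behrend construction on classes $V_i, V_j, V_k$, extend by $v(F) - 3$ more classes with constant-color bipartite blow-ups matching the remaining edges of $F$, and choose intra-class colors so that no spurious $F$-copies arise with multiple vertices in one class; the only $F$-copies are then one-per-class extensions of Behrend triples, of the required count. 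For the affine $K_4$, the seed-triangle approach fails; instead I would place a Behrend-type restriction on the blow-up $V = \mathbb{F}_2^2 \times [m]$, choosing the restriction on each of the three color classes so that the six difference constraints of an $F$-copy algebraically force a $3$-AP inside the AP-free set, thereby collapsing the count of $F$-copies to $m^{2-o(1)}$ while preserving $\varepsilon N^2$ pair-disjoint copies.

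The main obstacles are, on the easy side, extracting polynomial rather than tower-type bounds from the iterative use of Gallai's decomposition; and on the hard side, engineering the affine $K_4$ construction, where the absence of a usable non-rainbow sub-triangle forces reliance on the global algebraic structure of the coloring to choose the Behrend restriction so that the six difference constraints interlock into a single $3$-AP.
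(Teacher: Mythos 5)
Your high-level plan has the same skeleton as the paper (easy direction via Gallai's decomposition; hard direction via Behrend/Ruzsa--Szemer\'edi constructions, isolating the rainbow triangle and the ``affine'' $K_4$), and your case analysis for the hard direction --- pigeonhole for $v(F)\geq 5$, the single exception $F_4$ at $v(F)=4$, and the rainbow triangle at $v(F)=3$ --- matches the paper's. However, there are two genuine gaps.

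\textbf{The Gallai removal lemma is not actually proved.} The crux of the easy direction is the following approximate version of Gallai's theorem: if $G$ has at most $\poly(\varepsilon)n^3$ rainbow triangles, then for some two colors $a,b$ there is a vertex partition of $G$ that is $\varepsilon$-close to $(a,b)$-monochromatic (Lemma~\ref{lem:approximate partition} in the paper). Your plan to ``approximate a Gallai partition via a majority vote on edge colors'' presupposes you already know what to take the majority over --- but the entire difficulty is in locating such a partition at all, given only the hypothesis that rainbow triangles are scarce. The paper gets this by a delicate constant-size sampling argument (in the spirit of Alon--Fox for $P_4$-freeness): it samples a seed $S$ plus boosting blocks $T_1,\dots,T_k$, extracts a Gallai partition of the sampled set, and shows that with positive probability this partition propagates to a near-Gallai partition of all of $G$, via Claims~\ref{claim:forcing}--\ref{claim:approximate_monochromatic}. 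Nothing in your sketch substitutes for this. Moreover, your accounting for the recursion --- ``$O(\log(1/\varepsilon))$ levels, each destroying a constant fraction of the rainbow triangles'' --- is incorrect in both parts: a Gallai partition can have one giant part (so the recursion depth is not bounded by $O(\log(1/\varepsilon))$, and in the paper it is simply left unbounded), and no ``constant fraction'' of rainbow triangles is destroyed per level. The correct bookkeeping is that the sets $E(\mathcal P_X)$ over distinct tree nodes $X$ are pairwise disjoint, so the total internal cost is $\varepsilon\sum_X e(\mathcal P_X)\leq \varepsilon\binom n2$, independent of the depth; leaves (size $\leq\varepsilon n$) are then recolored wholesale for an additional $\leq\varepsilon n^2/2$.

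\textbf{The affine $K_4$ construction is left unresolved.} You correctly identify that $F_4$ (each color a perfect matching) has no non-rainbow sub-triangle, so the seed-triangle counting breaks, but you stop at ``choose the Behrend-type restriction so the six difference constraints interlock into a single $3$-AP.'' The paper resolves this cleanly and you are overestimating the difficulty: take $S\subseteq[m]$ with no nontrivial solution to $s_1+s_2+s_3=3s_4$ (an Alon-type Behrend set), place the four parts $V_i=[i\cdot m]$, and plant copies on $v_i=x+(i-1)s$. The color pattern of $F_4$ forces any $F_4$-copy to use one vertex per part; the four planted difference constraints $v_2-v_1=s_1$, $v_3-v_2=s_2$, $v_4-v_3=s_3$, $v_4-v_1=3s_4$ then yield exactly $s_1+s_2+s_3=3s_4$, collapsing to $s_1=\cdots=s_4$. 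You would need to exhibit such a concrete equation for your argument to go through; ``AP-free'' by itself is not the right relation, since $s_1+s_2+s_3=3s_4$ is not a $3$-term arithmetic progression.

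The remaining parts of your hard-direction sketch (monochromatic and two-colored triangles for $v(F)=3$, and extending a non-rainbow sub-triangle to larger $F$) are consistent with the paper's Lemma~\ref{lem:RS_triangle}, which handles all these cases at once by planting full copies of $F$ and counting triangles that avoid a distinguished color.
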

	
The main part in the proof of Theorem \ref{thm:3-colored_graphs} is to show that the property of having no rainbow triangles is easily testable. This is done in Section \ref{sec:3-colored}. The structure of $3$-colored complete graphs with no rainbow triangles (also called Gallai colorings) was described by a fundamental result of Gallai \cite{Gallai} (see also \cite{GS}). This result states that if $G$ has no rainbow triangles, then $G$ is obtained from a 2-colored complete graph by replacing each vertex with a 3-colored complete graph without rainbow triangles (and replacing edges with complete bipartite graphs of the same color). Moreover, every 3-colored complete graph obtained in this way has no rainbow triangles. 
This structure result (stated below as Lemma \ref{lem:Gallai}) will play a key role in the proof. 

There are two digraphs $D$ for which $C(D)$ is the rainbow triangle. Let us denote them by $D_1,D_2$. Even though the rainbow triangle is easily testable, it turns out that induced $D_i$-freeness is hard to test for each $i = 1,2$. 
Theorem \ref{thm:3-colored_graphs} does imply however that the property of avoiding {\em both} $D_1,D_2$ as induced subdigraphs is easily testable. These digraphs $D_1,D_2$ are the only cases of Theorem \ref{thm:digraphs} which are not covered by using Theorem \ref{thm:3-colored_graphs} and Proposition \ref{prop:coloring}.  

To complement Theorem \ref{thm:3-colored_graphs}, we show that for $k \geq 4$, there are no non-trivial easily testable $k$-colored complete graphs.
\begin{proposition}\label{prop:k-colored}
	Let $k \geq 4$ and let $F$ be a $k$-colored complete graph. Then $F$-freeness is easily testable if and only if $v(F) = 2$.
\end{proposition}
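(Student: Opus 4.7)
The ``if'' direction is immediate.

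For the ``only if'' direction, fix $k \geq 4$ and let $F$ be a $k$-colored complete graph with $v(F) \geq 3$. My plan is to exhibit a hardness construction, first handling the base case $v(F) = 3$ and then lifting to $v(F) \geq 4$ via a blow-up. For the base case $v(F) = 3$, $F$ uses at most three colors. If $F$ is not the rainbow triangle, I would invoke Theorem~\ref{thm:3-colored_graphs} (applied in the 3-colored setting), which immediately provides a hardness construction; this is automatically valid in the $k$-colored setting by not using the extra $k-3$ colors. If $F$ is a rainbow triangle with edge colors $c_1,c_2,c_3$, the assumption $k \geq 4$ supplies a spare color $c^* \notin \{c_1,c_2,c_3\}$, and I plan to apply the classical Ruzsa-Szemer\'edi--Behrend construction: take a 3-AP-free set $S \subseteq [N]$ with $|S| = N^{1-o(1)}$ and three parts $V_1 \sqcup V_2 \sqcup V_3$ (each identified with $[N]$); color the edge from $x \in V_i$ to $y \in V_j$ (for $i<j$) by $F(v_i,v_j)$ if $(y-x)/(j-i) \in S$ and by $c^*$ otherwise, and color all within-part edges by $c^*$. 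Since $c_1,c_2,c_3$ occur only across parts, every copy of $F$ must be transversal; by the no-3-AP property of $S$, each such copy is a diagonal progression $(a,a+d,a+2d)$ with $a \in [N]$ and $d \in S$. This gives $N|S|$ pair-disjoint copies and no others, yielding a hardness construction with $\delta \ll \poly(\varepsilon)$.

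For $v(F) \geq 4$, I plan to lift by blow-up. Fix any 3-vertex induced subgraph $F'$ of $F$, and let $G_0$ be a $k$-colored hardness construction for $F'$ as above, with $\varepsilon n_0^2$ pair-disjoint copies $C_1,\dots,C_m$ and at most $\delta n_0^3$ total copies. For each $C_l$, attach $v(F)-3$ personalized new vertices $w_{l,4},\dots,w_{l,v(F)}$, color the edges within each extended copy $C_l \cup \{w_{l,j}\}_j$ so as to realize $F$, and color the remaining new edges (between $w_{l,j}$ and $w_{l',j'}$ for $l \neq l'$, or between $w_{l,j}$ and an old vertex outside $C_l$) with a default color $c^*$. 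When $c^*$ can be chosen outside $F$'s palette --- always possible if $F$ uses fewer than $k$ colors --- no spurious copy of $F$ contains a default-colored edge, so every copy of $F$ in the lifted graph is either confined to $V(G_0)$ (bounded in count by a constant times the $F'$-count in $G_0$) or is one of the $\varepsilon n_0^2$ intended extended copies; hardness follows.

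The main obstacle will be the lifting step when $F$ uses all $k$ colors, so that no spare $c^*$ exists and one must pick $c^*$ as some $F$-color. Spurious copies containing default-colored edges become possible, but I would argue that the arithmetic rigidity of $G_0$ (inherited from Behrend's set $S$) forces such spurious copies to satisfy non-trivial linear constraints with superpolynomially few solutions in $\varepsilon$. The hypothesis $k \geq 4$ is essential throughout: it provides the spare color for the tripartite Behrend construction in the base case, and the color flexibility needed to control spurious copies in the lifting.
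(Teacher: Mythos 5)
Your base case $v(F)=3$ is fine, but the lifting step for $v(F)\geq 4$ is broken for a reason more basic than the ``spurious copies'' obstacle you flag at the end: the vertex count blows up and destroys the density. You start with $G_0$ on $n_0$ vertices containing $m=\varepsilon n_0^2$ pair-disjoint copies $C_1,\dots,C_m$ of the $3$-vertex subgraph $F'$, and to each $C_l$ you attach $v(F)-3$ fresh personalized vertices $w_{l,4},\dots,w_{l,v(F)}$. That is $(v(F)-3)\varepsilon n_0^2$ new vertices in total, so the final graph has $n\approx (v(F)-3)\varepsilon n_0^2$ vertices (the new vertices dominate once $n_0$ is large). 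You still only have $m=\varepsilon n_0^2\approx n/(v(F)-3)$ extended copies, i.e.\ $O(n)$ pair-disjoint copies of $F$, which is nowhere near $\varepsilon' n^2$ for any fixed $\varepsilon'>0$. So the lifted graph is not a hardness construction at all, regardless of how you color the leftover edges or control spurious copies. Sharing the new vertices among copies (rather than personalizing) would fix the count, but then you would need an arithmetic structure on the shared parts to keep copies pair-disjoint and prevent spurious copies --- which is precisely what the paper's construction does from the start, rendering the two-stage reduction pointless.

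The paper's route (Lemma~\ref{lem:RS_triangle}) avoids all of this: since $v(F)\geq 3$ and $k\geq 4$, $F$ contains a triangle whose three edges miss some color $c^*$. One then builds a single Behrend-style host $H$ on $f=v(F)$ parts, placing whole copies $F_{x,s}$ of $F$ on arithmetic progressions $v_i=x+(i-1)s$ and defaulting all other edges (including within-part edges) to $c^*$. Because every copy of $F$ must contain a triangle avoiding $c^*$, and such triangles are counted by the Behrend-type equation $ps_1+qs_2=(p+q)s_3$, the total count of copies is superpolynomially small; a blowup (Lemma~\ref{lem:design}) then gives the final construction. This single construction handles all $v(F)\geq 3$ at once and is what you should aim for; there is no need to first do $v(F)=3$ and then lift.
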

\noindent
The proof of the hardness direction of Theorems \ref{thm:digraphs}-\ref{thm:3-colored_graphs} and Proposition \ref{prop:k-colored} appears in Section \ref{sec:hard}. 

\section{Testing for Gallai colorings}\label{sec:3-colored}

In this section we prove that the property of having no rainbow triangles is easily testable. We restate this result as follows. 
\begin{theorem}\label{thm:Gallai_coloring}
	Let $\varepsilon > 0$ be small enough, and let $G$ be an $n$-vertex $3$-colored complete graph with at most $\varepsilon^{36} n^3$ rainbow triangles. Then $G$ can be made rainbow-triangle-free by changing the color of at most $\varepsilon n^2$ edges. 
\end{theorem}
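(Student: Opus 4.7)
The plan is to mimic the recursive decomposition provided by Gallai's structure theorem (Lemma \ref{lem:Gallai}): a $3$-colored complete graph is rainbow-triangle-free if and only if its vertex set can be partitioned into $k \geq 2$ parts so that between any two parts the edges are monochromatic, in total only two of the three colors appear between parts, and each part itself induces a Gallai coloring. My aim is to locate such a decomposition approximately inside $G$, recolor the few violating between-part edges, and then iterate inside each part.

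Before descending into the decomposition, I would dispense with the trivial case in which some color class has fewer than $\varepsilon n^2 / 3$ edges: recoloring its edges with one of the remaining two colors produces a $2$-colored and hence Gallai graph within budget. Thus I may assume each color class has at least $\varepsilon n^2/3$ edges. The first structural step is to select, by averaging over the $\leq \varepsilon^{36} n^3$ rainbow triangles, a vertex $v$ contained in at most $3\varepsilon^{36} n^2$ rainbow triangles; the rainbow triangles through $v$ are exactly the edges between $N_i(v)$ and $N_j(v)$ whose color is the third color $k \notin \{i,j\}$, so between any two of these neighborhoods the ``third'' color is rare, and $O(\varepsilon^{36} n^2)$ edits suffice to restrict each pair $N_i(v),N_j(v)$ to colors $\{i,j\}$.

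This alone is not Gallai, since a triangle with one vertex in each $N_c(v)$ can still be rainbow. I would therefore enlarge $\{v\}$ to a small random ``pivot set'' $S$ of size $\textrm{poly}(1/\varepsilon)$ and classify every other vertex by its type $\tau(u) \in \{1,2,3\}^S$. For a typical $S$, few rainbow triangles pass through elements of $S$, so for most pairs $u,w$ of different types the color $c(uw)$ is heavily constrained: differing at a coordinate $s \in S$ forces $c(uw) \in \{\tau(u)_s, \tau(w)_s\}$, and the intersection of these constraints over the differing coordinates usually collapses to a single color. The resulting partition of $V$ by types has $\textrm{poly}(1/\varepsilon)$ parts whose between-pair edges are almost monochromatic and, in total, use only two colors, matching the top level of a Gallai decomposition. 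After recoloring the small fraction of violating edges, I would recurse inside each type-class.

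The main obstacle is bounding the total edit cost polynomially despite the natural recursion depth of $\Omega(\log n)$. I expect this to be handled by the fact that the edits performed at different levels of the recursion lie on disjoint edge sets, so that $\sum_i \textrm{poly}(\varepsilon) \cdot |V_i|^2$ over all parts $V_i$ appearing in the recursion tree is dominated by the single term $\textrm{poly}(\varepsilon) \cdot n^2$ via a telescoping/potential argument; parts that shrink below size $\varepsilon^{c} n$ can be made Gallai by recoloring all their internal edges at acceptable cost. The exponent $36$ in the hypothesis reflects the accumulation of a constant number of polynomial losses along these structural reductions (isolating a small color class, picking the good pivot $v$, sampling $S$, monochromatizing each between-part block, and handling the base case), each contributing a fixed polynomial factor in $\varepsilon$.
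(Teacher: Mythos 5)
Your high-level architecture — an approximate version of Gallai's structure theorem for colorings with few rainbow triangles, followed by recursion along a tree whose between-level edge sets are disjoint, with small leaves cleaned up by brute force — matches the paper's plan (Lemma \ref{lem:approximate partition} followed by the proof of Theorem \ref{thm:Gallai_coloring}), and your accounting for the recursion is essentially correct. However, the approximate partition step, as you propose to prove it, has a genuine gap. Classifying vertices by their type $\tau(u) \in \{1,2,3\}^S$ with respect to a random pivot set $S$ does not produce a partition that is close to being $(a,b)$-monochromatic, even when $G$ is \emph{exactly} a Gallai coloring. The type partition refines the entire Gallai decomposition tree, not merely its top level: two vertices in the same top-level Gallai part $V_i$ will typically have different types, distinguished by which sub-blocks of $V_i$'s internal decomposition they lie in and by pivots inside those sub-blocks, and the edges between two such types can legitimately carry the third color (the one forbidden at the top level but allowed inside $V_i$). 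Thus the between-type edges use all three colors and the number of types is not $\mathrm{poly}(1/\varepsilon)$ — it can be exponential in $|S|$. The per-coordinate constraint you write, that differing at $s$ forces $c(uw) \in \{\tau(u)_s,\tau(w)_s\}$, is correct, but the intersection of these constraints for $u,w$ in the same top-level part will often collapse precisely to the forbidden third color, so it cannot yield a two-color-between-parts structure. Nothing in the type information alone tells you which pairs of types should be merged back into a single Gallai part, nor which two colors $a,b$ to aim for.

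The paper sidesteps this by sampling a larger random set $R = S \cup T$ and conditioning on $R$ being rainbow-triangle-free, so that $R$ has a genuine $(a,b)$-monochromatic partition $(W_1,\dots,W_q)$ by Lemma \ref{lem:Gallai}; the colors $a,b$ and the seed partition $U_i := W_i \cap S$ of $S$ are read off from this partition of the sample. The extension from $S$ to all of $V(G)$ is then carried out by the forcing argument of Claim \ref{claim:forcing}: one iteratively grows sets $V_i^{(\ell)}$ of vertices provably forced into the $i$-th part of \emph{any} $(a,b)$-monochromatic partition extending $(U_1,\dots,U_p)$, using the random blocks $T_1,\dots,T_k$ as witnesses. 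One then shows (Claims \ref{claim:balanced} and \ref{claim:approximate_monochromatic}) that either one of these sets is nearly all of $V(G)$, or some pair has too many wrong-colored edges (both contradicting the existence of the extending partition with high probability), or else the $V_i^{(\ell)}$ together with the leftovers form the desired $\varepsilon$-close partition. This is what makes the number of parts bounded by $|S|$ and restricts the between-part colors to $\{a,b\}$; the raw type partition does neither. Your first-step pivot $v$ and the preliminary recoloring of $N_i(v)\times N_j(v)$ are also not needed and do not feed into a correct second step.
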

The proof is similar in spirit to the argument used by Alon and Fox \cite{AF} to show that the property of being a cograph (or, equivalently, of having no induced path on four vertices) is easily testable. 
We now introduce the necessary definitions.
Let $\mathcal{P} = (V_1,\dots,V_m)$ be a vertex-partition of a $3$-colored complete graph. For colors $a,b \in [3]$, we say that $\mathcal{P}$ is {\em $(a,b)$-monochromatic} if each of the bipartite graphs $(V_i,V_j)$ is monochromatic in color $a$ or in color $b$. Denote by $E(\mathcal{P})$ the set of all edges which go between the sets $V_1,\dots,V_m$, and put $e(\mathcal{P}) := |E(\mathcal{P})| = \sum_{1 \leq i < j \leq m}{|V_i||V_j|}$. We say that $\mathcal{P}$ is {\em $\varepsilon$-close to being $(a,b)$-monochromatic} if one can turn $\mathcal{P}$ into an $(a,b)$-monochromatic partition by changing the color of at most $\varepsilon \cdot e(\mathcal{P})$ of the edges in $E(\mathcal{P})$. 
Gallai \cite{Gallai,GS} proved the following fundamental fact about colorings with no rainbow triangles. 
\begin{lemma}[\cite{Gallai,GS}]\label{lem:Gallai}
	If $G$ is a $3$-colored complete graph with $|V(G)| \geq 2$ and with no rainbow triangles, then there exist two colors $a,b \in [3]$ such that $G$ admits an $(a,b)$-monochromatic partition (with at least two parts). 
	Conversely, if $\mathcal{P}$ is an $(a,b)$-monochromatic partition of $G$ (for some two colors $a,b$), and $G[X]$ has no rainbow triangles for every $X \in \mathcal{P}$, then $G$ has no rainbow triangles. 
\end{lemma}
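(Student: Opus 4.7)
The plan is to treat the two directions separately. The converse direction (``if a valid partition exists and each part is rainbow-free, then $G$ is rainbow-free'') is a short case analysis on how a triangle $T$ in $G$ sits relative to the partition $\mathcal P=(V_1,\dots,V_m)$. If $T$ is contained in a single part $X$, non-rainbowness follows from the hypothesis on $G[X]$. If $T$ has two vertices in $V_i$ and one in $V_j$, then its two cross-edges lie in the monochromatic bipartite graph $(V_i,V_j)$ and share a color, so $T$ uses at most two colors. If the vertices of $T$ lie in three distinct parts, every edge of $T$ is a cross-edge of color in $\{a,b\}$, so again at most two colors appear on $T$.

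For the forward direction I would argue by strong induction on $n=|V(G)|$, the base $n=2$ being trivial. The heart of the argument is the following structural dichotomy: in any rainbow-triangle-free $3$-coloring of $K_n$ with $n\geq 2$, for some color $c\in[3]$ either the color-$c$ graph $G_c$ is disconnected on $V(G)$, or its complement $\bar G_c=G_a\cup G_b$ (where $\{a,b,c\}=[3]$) is disconnected on $V(G)$.

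Granted this dichotomy, the required partition drops out. If $\bar G_c$ has components $V_1,\dots,V_m$, then every cross-edge has color $c$, so $\mathcal P=(V_1,\dots,V_m)$ is $(c,c')$-monochromatic for any $c'$. If instead $G_c$ has components $V_1,\dots,V_m$ with $\{a,b\}=[3]\setminus\{c\}$, then every cross-edge has color in $\{a,b\}$; to see that each bipartite graph $(V_i,V_j)$ is \emph{individually} monochromatic, take $u,u'\in V_i$ and $v\in V_j$ with $uv$ of color $a$, and propagate along a $G_c$-path $u=u_0,\dots,u_\ell=u'$ inside $V_i$. For each triangle $u_tu_{t+1}v$, the edge $u_{t+1}v$ cannot be color $c$ (as $v\notin V_i$) and cannot be color $b$ (as that would make the triangle rainbow), so it is color $a$; iterating gives $u'v$ of color $a$. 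Thus $\mathcal P$ is $(a,b)$-monochromatic with at least two parts.

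The main obstacle is establishing the dichotomy itself. The natural approach, close to Gallai's original, is to pass to the ``twin'' equivalence $u\sim v$ defined by $uw$ and $vw$ having the same color for every other vertex $w$; its classes $V_1,\dots,V_m$ already make each $(V_i,V_j)$ monochromatic, and contracting them yields a rainbow-triangle-free coloring of $K_m$ in which no two vertices are twins. The delicate combinatorial step is then to show that such a ``twin-free reduced'' coloring uses at most two colors (equivalently, some color class is empty and hence its graph is disconnected in $G$). This last step is precisely the core of Gallai's theorem \cite{Gallai} (see \cite{GS} for a modern exposition), which I would cite rather than reprove from scratch.
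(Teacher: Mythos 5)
The paper does not prove this lemma at all: it is stated with citations to \cite{Gallai,GS} and used as a black box, so there is no paper proof to compare against. Your expansion is correct and compatible with that treatment. The case analysis for the converse direction is complete. In the forward direction, your propagation argument showing that each bipartite graph $(V_i,V_j)$ is individually monochromatic is sound (you spell out the propagation along a $G_c$-path in $V_i$ for fixed $v$; the symmetric propagation in $V_j$ then finishes it). And you correctly isolate the genuinely nontrivial core (a twin-free, rainbow-triangle-free coloring of $K_m$ uses at most two colors), which you cite rather than reprove, exactly as the paper does. One small redundancy worth noting: once you pass to the twin quotient and invoke the cited core fact, the twin classes themselves (when there are at least two) already form the required $(a,b)$-monochromatic partition, since each bipartite graph between twin classes is monochromatic by definition of twins and the quotient uses at most two colors; thus the intermediate dichotomy and propagation argument, while correct, can be bypassed. (When all vertices are mutual twins and $n\geq 3$, a short check shows $G$ is monochromatic, so the partition into singletons works; $n=2$ is trivial.)
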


\noindent
Before proceeding, let us prove the following very simple lemma:
\begin{lemma}\label{lem:balanced}
	Let $m,d,a_1,\dots,a_p \geq 0$ such that $a_1 + \dots + a_p = m$ and $a_i \leq m - d$ for every $1 \leq i \leq p$. Then $\sum_{1 \leq i < j \leq p}{a_ia_j} > d \cdot \frac{m-d}{2}$. 
\end{lemma}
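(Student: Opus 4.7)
The plan is to reduce the estimate to an upper bound on $\sum_i a_i^2$, via the standard identity
\[
\sum_{1 \leq i < j \leq p} a_i a_j = \tfrac{1}{2}\Bigl(\bigl(\textstyle\sum_i a_i\bigr)^2 - \textstyle\sum_i a_i^2\Bigr) = \tfrac{1}{2}\Bigl(m^2 - \textstyle\sum_i a_i^2\Bigr).
\]
So it suffices to show $\sum_i a_i^2 < m^2 - d(m-d)$. Since each $a_i \geq 0$ is bounded by $m - d$, I would use the elementary pointwise inequality $a_i^2 \leq (m-d)\cdot a_i$ and sum over $i$; combined with $\sum_i a_i = m$, this yields $\sum_i a_i^2 \leq (m-d)m = m^2 - md$.

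Plugging this into the identity gives $\sum_{i<j} a_i a_j \geq md/2$, and since $md/2 - d(m-d)/2 = d^2/2 > 0$ (under the implicit assumption $d > 0$ that makes the claim non-trivial), the strict inequality $\sum_{i<j} a_i a_j > d(m-d)/2$ follows. I do not anticipate any real obstacle: the only subtle point is where the strictness comes from, and this is accounted for by the $d^2/2$ slack between the bound $md/2$ one actually proves and the weaker bound $d(m-d)/2$ in the statement.
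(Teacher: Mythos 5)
Your proof is correct and takes a genuinely different (and arguably cleaner) route than the paper. The paper sorts $a_1 \leq \dots \leq a_p$, chooses the minimal index $i$ with $a_1 + \dots + a_i \geq d$, shows $i \leq p-1$ and that the "upper half" $a_{i+1} + \dots + a_p$ exceeds $(m-d)/2$, and then lower-bounds $\sum_{i<j} a_i a_j$ by the single cross-product $(a_1 + \dots + a_i)(a_{i+1} + \dots + a_p)$. You instead use the identity $\sum_{i<j} a_i a_j = \tfrac12(m^2 - \sum_i a_i^2)$ together with the pointwise bound $a_i^2 \leq (m-d)a_i$, obtaining the stronger conclusion $\sum_{i<j} a_i a_j \geq md/2$, from which the claimed bound $> d(m-d)/2$ follows with slack $d^2/2$. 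Your approach avoids the sorting and threshold bookkeeping, and makes the source of strictness transparent; note that both your argument and the paper's implicitly require $d > 0$ (the paper's step "$a_1 + \dots + a_{i-1} < d$" needs $d>0$ when $i=1$, and indeed the lemma as stated is vacuously false for $d=0, p=1$), which is harmless since the application has $d = \varepsilon n / 4 > 0$.
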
	
\begin{proof}
	Without loss of generality, assume that $a_1 \leq \dots \leq a_p$. Let $1 \leq i \leq p$ be minimal with $a_1 + \dots + a_i \geq d$. We have $i \leq p-1$, because otherwise we would have $a_p > m - d$, a contradiction. 
	Note that $a_{i+1} + \dots + a_p \geq a_{i+1} \geq a_i$ and $a_{i+1} + \dots + a_p = m - (a_1 + \dots + a_i) = m - (a_1 + \dots + a_{i-1}) - a_i > 
	m - d - a_i$. Summing these two inequalities and dividing by $2$, we obtain that $a_{i+1} + \dots + a_p > \frac{m-d}{2}$. Now,
	$
	\sum_{1 \leq i < j \leq p}{a_ia_j} \geq (a_1 + \dots + a_i) \cdot (a_{i+1} + \dots + a_p) > d \cdot \frac{m - d}{2},
	$
	as required. 
\end{proof}

	The main step in the proof of Theorem \ref{thm:Gallai_coloring} is the following approximate version of Lemma \ref{lem:Gallai}. It states that if a $3$-colored complete graph $G$ has few rainbow triangles, then for some two colors $a,b$, $G$ has a partition which is close to being $(a,b)$-monochromatic. 

	\begin{lemma}\label{lem:approximate partition}
		Let $\varepsilon > 0$ be small enough, and let $G$ be an $n$-vertex $3$-colored complete graph with at most $\varepsilon^{33} n^3$ rainbow triangles. Then there exist two colors $a,b \in [3]$ and a partition $\mathcal{P}$ of $V(G)$ which is $\varepsilon$-close to being $(a,b)$-monochromatic. 
	\end{lemma}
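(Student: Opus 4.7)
The plan is to construct the partition by mimicking, in an approximate form, the structural decomposition of Gallai colorings given by Lemma~\ref{lem:Gallai}. In an exact Gallai coloring, an $(a,b)$-monochromatic partition is naturally obtained from the connected components of the color-$c$ graph $G_c$ (where $c=[3]\setminus\{a,b\}$): no color-$c$ edge crosses between two such components, and the Gallai property forces each bipartite piece between components to be monochromatic in $a$ or $b$. I would aim to show that, for an appropriate choice of $c$, this $G_c$-component partition (possibly with minor modifications) is $\varepsilon$-close to $(a,b)$-monochromatic even in the approximate setting.

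Fix a color $c$ and let $\mathcal{P}=(C_1,\dots,C_t)$ be the partition of $V(G)$ into connected components of $G_c$. By construction, no inter-part edge has color $c$, so the only obstruction to $(a,b)$-monochromaticity is that some bipartite piece $(C_s,C_{s'})$ may contain both color-$a$ and color-$b$ edges. The key structural observation is that each such ``mixed'' piece forces rainbow triangles: suppose $(u,w)$ is a color-$a$ edge and $(u',w')$ is a color-$b$ edge with $u,u'\in C_s$ and $w,w'\in C_{s'}$. Take a color-$c$ path $u=u_0,u_1,\dots,u_p=u'$ in $C_s$. Since $w\in C_{s'}$ lies in a different $G_c$-component, $c(u_iw)\neq c$ for every $i$; hence the triangle $u_iu_{i+1}w$ is colored $(c,c(u_iw),c(u_{i+1}w))$, which is rainbow unless $c(u_iw)=c(u_{i+1}w)$. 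Iterating, the color $c(u_iw)$ is constant along the path unless a rainbow triangle appears, forcing $c(u'w)=a$. A similar argument on a color-$c$ path in $C_{s'}$ from $w$ to $w'$ then forces the color change from $a$ to $b$ to produce a rainbow triangle. Thus the total mono-violation $\sum_{s<s'}\min(p_a(s,s'),p_b(s,s'))$ of $\mathcal{P}$ is controlled by the count of rainbow triangles $R\leq\varepsilon^{33}n^3$.

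To convert this qualitative bound into the required quantitative closeness $\varepsilon\cdot e(\mathcal{P})$, I would count rainbow-triangle ``witnesses'' per bipartite piece more carefully (each color-$a$/color-$b$ edge pair in a piece can be charged to at least one rainbow triangle through the path-pushing argument, modulo a bounded multiplicity). To make the ratio $(\text{mono-violation})/e(\mathcal{P})$ come out to at most $\varepsilon$, one needs $e(\mathcal{P})$ to be substantial. This is exactly where Lemma~\ref{lem:balanced} is invoked: by choosing $c$ so that no $G_c$-component is too dominant (say, every $C_s$ has size at most $(1-\alpha)n$ for an appropriate $\alpha=\poly(\varepsilon)$), the lemma gives $e(\mathcal{P})\geq\Omega(\alpha n^2)$.

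The main obstacle will be the degenerate case where, for every color $c$, the graph $G_c$ has a single connected component of size close to $n$, so that the $G_c$-component partition is trivial no matter which $c$ is chosen. Handling this case requires a separate construction, e.g.\ a further refinement within the giant component based on how vertices ``see'' color-$c$ edges, or a recursive application of the argument to a suitably selected subgraph exhibiting structural imbalance. Managing the polynomial losses from iterated Markov/averaging steps across these cases, together with the multiplicity incurred in the rainbow-triangle counting, is precisely what is accommodated by the large gap between $\varepsilon^{33}$ in the hypothesis and $\varepsilon$ in the conclusion.
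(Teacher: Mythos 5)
Your approach is genuinely different from the paper's. The paper proves the lemma by a random sampling argument: it samples a small random set $R=S\cup T_1\cup\dots\cup T_k$, observes that $G[R]$ is rainbow-triangle-free with high probability (since there are few rainbow triangles), applies the exact Gallai structure theorem to $G[R]$, and then argues that an $(a,b)$-monochromatic partition of $S$ that ``extends'' to $R$ can be pulled back to a partition of $V(G)$ that is $\varepsilon$-close to $(a,b)$-monochromatic. You instead try a direct structural argument using the connected components of a single color class, with a charging of mono-violations to rainbow triangles.

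There is a genuine gap in the charging step, and it is quantitative, not just a matter of ``bounded multiplicity.'' Fix a bipartite piece $(C_s,C_{s'})$ between two $G_c$-components. Your path-pushing observation correctly shows that a vertex $w\in C_{s'}$ with both color-$a$ and color-$b$ neighbours in $C_s$ yields at least one rainbow triangle. But in the worst case it yields \emph{exactly} one (e.g.\ if the color-$c$ graph on $C_s$ is a Hamilton path and the $a$/$b$ split along the path is an interval). Consequently the number of rainbow triangles in the piece can be as small as $O(|C_s|+|C_{s'}|)$ while $\min(p_a,p_b)$ is $\Theta(|C_s||C_{s'}|)$; the ``multiplicity'' is $\Theta(\min(|C_s|,|C_{s'}|))$, which is unbounded in $n$. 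More fundamentally, any scheme that charges recolored edges to rainbow triangles with $O(1)$ (or even $\mathrm{poly}(1/\varepsilon)$) multiplicity cannot close the argument, because the hypothesis only bounds rainbow triangles by $\varepsilon^{33}n^3$, which for $n\gg\mathrm{poly}(1/\varepsilon)$ dwarfs the target $\varepsilon n^2$. The hypothesis controls the \emph{density} of rainbow triangles, not their count relative to $n^2$, and a pointwise charging argument does not convert one into the other. This is precisely what the paper's sampling argument is designed to circumvent: instead of charging each change to a rainbow triangle, it shows that \emph{if} the desired partition fails to exist, then a random $O_\varepsilon(1)$-size sample would exhibit a rainbow triangle with probability bounded away from $0$, contradicting the density hypothesis via Markov.

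A second, separate gap is the degenerate case you flag, where for every color $c$ the graph $G_c$ has a near-spanning component. Your proposed remedies (``further refinement within the giant component'' or ``recursive application to a suitably selected subgraph'') are not worked out, and it is not clear they terminate or preserve the quantitative bookkeeping. The paper's proof handles this uniformly, because the partition is not defined from the component structure of $G$ at all, but is discovered from the Gallai decomposition of the small random sample $R$ and then propagated outward via the sets $V_i^{(\ell)}$. Your use of Lemma~\ref{lem:balanced} to lower-bound $e(\mathcal{P})$ is in the right spirit and mirrors its role in the paper, but it does not substitute for the missing probabilistic dichotomy.
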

	\begin{proof}
		We will denote the color of an edge $\{x,y\}$ by $c(x,y)$. 
		Let $d_i(x)$ be the degree of $x$ in color $i$ (for $i \in [3]$).
		If there is a vertex $x$ and a color $i \in [3]$ such that $d_i(x) \geq (1-\varepsilon)(n-1)$, then the partition $\{x\},V(G) \setminus \{x\}$ satisfies the requirement in the lemma. So from now on, suppose that $d_i(x) < (1-\varepsilon)(n-1)$ for every vertex $x$ and color $i \in [3]$. 

		If there are less than $\varepsilon \binom{n}{2}$ edges of some color $i \in [3]$, then we can take $\mathcal{P}$ to be the partition of $V(G)$ into singletons (taking $a,b$ to be the two colors which are not $i$). So we may assume that for each color $i \in [3]$, there are at least $\varepsilon \binom{n}{2}$ edges of color $i$.
		 
		For a color $i \in [3]$, let $\text{SMALL}_i$ be the set of all vertices $v$ with $d_i(v) \leq \frac{\varepsilon^2}{128}n$. 
		Note that $|\text{SMALL}_i| \leq (1-\frac{\varepsilon}{2})n$ because otherwise there would be less than $\varepsilon \binom{n}{2}$ edges in color $i$. 
		Set
		$$
		s := \frac{128\log(2000/\varepsilon^2)}{\varepsilon^2},
		$$ 
		$$
		\delta := \frac{\varepsilon^2}{64s^2},
		$$
		$$
		k := \frac{128}{\varepsilon^2},
		$$
		and
		$$
		t := \frac{2(k+s\log s)}{\delta}.
		$$
		Note that $s = \tilde{O}(\frac{1}{\varepsilon^2})$, $\delta = \tilde{\Omega}(\varepsilon^6)$ and $t = \tilde{O}(\frac{1}{\varepsilon^8})$. 
		We will later use the fact that
		\begin{equation}\label{eq:s,t}
		2^k e^{-\delta t} \ll s^{-s},
		\end{equation}
		which easily follows from our choice of $t$. Here, the $\ll$ means that the left-hand side is smaller than $C$ times the right hand side for a fixed constant $C$, provided that $\varepsilon$ is small enough. 
		
		Sample $s + kt$ vertices of $G$ uniformly at random and independently. Let $S$ be the set of the first $s$ vertices, $T_1$ be the set of the next $t$ vertices, $T_2$ the set of the next $t$, and so on. 
		Put $T := T_1 \cup \dots \cup T_k$ and $R := S \cup T$. 
		Let $E_0$ be the event that $G[R]$ contains no rainbow triangles. 
		We have $|R| = s + kt = \tilde{O}(\frac{1}{\varepsilon^{10}}) \leq \frac{1}{2\varepsilon^{11}}$, say, where the last inequality holds if $\varepsilon$ is small enough.  
		Since $G$ contains at most $\Delta := \varepsilon^{33} n^3$ rainbow triangles, the probability that $G[R]$ contains a rainbow triangle is at most 
		$\binom{|R|}{3} \cdot \Delta \cdot 6 \cdot \frac{1}{n^3} \leq 
		\frac{|R|^3\Delta}{n^3} \leq \frac{1}{8}$. Namely, $\mathbb{P}[E_0] \geq \frac{7}{8}$. 
		
		Say that a vertex $v \in V(G)$ is {\em bad} if there is a color $i \in [3]$ such that $v \notin \text{SMALL}_i$, and yet $v$ has no neighbour of color $i$ in $S$. Observe that if $v \in V(G) \setminus \text{SMALL}_i$, then the probability that $v$ has no color-$i$ neighbour in $S$ is at most 
		$(1 - \varepsilon^2/128)^s \leq e^{-s\varepsilon^2/128} = \frac{\varepsilon^2}{2000}$. 
		Hence, the expected number of bad vertices is at most $3 \cdot \frac{\varepsilon^2}{2000} \cdot n \leq \frac{\varepsilon^2}{512}n$. Let $E_1$ be the event that there are at most $\frac{\varepsilon^2}{128}n$ bad vertices. By Markov's inequality, $\mathbb{P}[E_1] \geq \frac{3}{4}$. 
		
		Let $Z$ be the set of all vertices $v$ such that all edges between $v$ and $S$ have the same color. 
		For a vertex $v$, recall that $d_i(v) < (1 - \varepsilon)(n-1) \leq n-1 - \varepsilon n /2$ for every color $i$. 
  		Hence, \linebreak $\mathbb{P}[v \in Z] \leq 3 \cdot (1-\varepsilon/2)^s \leq 
  		3 e^{-\varepsilon s/2}$. It follows that $\mathbb{E}[|Z|] \leq 3 e^{-\varepsilon s/2}n$, and hence $\mathbb{P}[|Z| \geq 24 e^{-\varepsilon s/2}n] \leq \frac{1}{8}$ by Markov's inequality. 
  		Let $E_2$ be the event that $T \cap Z = \emptyset$. 
  		By the union bound, we have $\mathbb{P}[T \cap Z \neq \emptyset] \leq kt \cdot |Z|/n$. 
  		So if 
  		$|Z| \leq 24 e^{-\varepsilon s/2}n$, then 
  		$\mathbb{P}[T \cap Z \neq \emptyset] \leq kt \cdot 24 e^{-\varepsilon s/2} \leq \frac{1}{8}$, where the last inequality holds for $\varepsilon$ small enough, as $e^{\varepsilon s/2}$ is (at least) exponential in $1/\varepsilon$, while $kt$ is polynomial in $1/\varepsilon$. So overall, 
  		$\mathbb{P}[E_2] \geq \frac{3}{4}$. 
		
		For a partition $S = U_1 \cup \dots \cup U_p$ and for a set $X \supseteq S$, we say that a partition $X = W_1 \cup \dots \cup W_q$ (where $q \geq p$) {\em extends} $(U_1,\dots,U_p)$ if $W_i \cap S = U_i$ for every $1 \leq i \leq p$. 
		
		Suppose that $E_0$ and $E_2$ happened. 
		Since $E_0$ happened, by Lemma \ref{lem:Gallai} there exists an $(a,b)$-monochromatic partition $R = W_1 \cup \dots \cup W_q$, $q \geq 2$, for some two colors $a,b$. 
		Let $U_i := W_i \cap S$, and suppose without loss of generality that $U_1,\dots,U_{p}$ are the nonempty sets among $U_1,\dots,U_q$. We claim that 
		$p \geq 2$. Indeed, if $p = 1$ then $S \subseteq W_1$. But then, for any $w \in W_2$, we have that all edges between $w$ and $S$ have the same color. This however implies that $w \in Z$, contradicting that $E_2$ happened. 
		So we see that if $E_0$ and $E_2$ happened, then there exist two colors $a,b$, a partition 
		$S = U_1 \cup \dots \cup U_p$ with $p \geq 2$, and an $(a,b)$-monochromatic partition $R = W_1 \cup \dots \cup W_q$ which extends $(U_1,\dots,U_p)$.
		The main step in the proof is to establish the following:
		

		\begin{claim*}\label{statement:approximate_partition main}
			Fix any choice of $S$ and suppose that $E_1$ happened. Then, either there is a partition $\mathcal{P}$ as in the statement of the lemma, or with probability larger than $\frac{3}{4}$ the following holds: for every two colors $a,b \in [3]$ and for every $(a,b)$-monochromatic partition $S = U_1 \cup \dots \cup U_p$ with $p \geq 2$, there is no $(a,b)$-monochromatic partition of $R$ which extends $(U_1,\dots,U_p)$. 
		\end{claim*}
		\begin{proof}
			Fix two colors $(a,b)$ and an $(a,b)$-monochromatic partition $S = U_1 \cup \dots \cup U_p$ with $p \geq 2$. Without loss of generality, suppose that $a = 1, b = 2$. For $1 \leq i < j \leq p$, let $c_{i,j} \in \{1,2\}$ be the color of the monochromatic bipartite graph $(U_i,U_j)$.  
			Let $\mathcal{A}$ be the event that there is a $(1,2)$-monochromatic partition of $R$ which extends $(U_1,\dots,U_p)$. 
			We will show that either there is a partition $\mathcal{P}$ as in the statement of the lemma, or $\mathbb{P}[\mathcal{A}] \ll s^{-s}$.  
			We then take the union bound over all at most $3 \cdot s^s$ choices of $a,b$ and $(U_1,\dots,U_p)$, to get the required result.

			Let us define sets $V_i^{(\ell)}$, $1 \leq \ell \leq k$ and $1 \leq i \leq p$, as follows. The definition is by induction on $\ell$. 
			For $1 \leq i \leq p$, define $V^{(1)}_i$ as the set of vertices $x \in V(G) \setminus S$ such that there is an edge of color $3$ between $x$ and $U_i$.
			Since $E_1$ happened, 
			all but at most $\frac{\varepsilon^2}{128} n$ of the vertices in $V(G) \setminus \text{SMALL}_3$ belong to $V_1^{(1)} \cup \dots \cup V_p^{(1)}$. 
			Hence, $|V_1^{(1)} \cup \dots \cup V_p^{(1)}| \geq n - |\text{SMALL}_3| - \frac{\varepsilon^2}{128} n \geq
			\frac{\varepsilon}{2}n - \frac{\varepsilon^2}{128} n \geq \frac{\varepsilon}{4}n$. 
			Now let $2 \leq \ell \leq k$, and suppose we already defined $V^{(\ell-1)}_1,\dots,V^{(\ell-1)}_p$. For $1 \leq i \leq p$, let $V^{(\ell)}_i$ be the set of all $x \in V(G) \setminus S$ such that either $x \in V^{(\ell-1)}_i$, or 
			there are at least $\delta n$ edges of color $3$ between $x$ and $V^{(\ell-1)}_i$, or
			for each color $c \in \{1,2\}$, there are at least $\delta n$ edges of color $c$ between $x$ and $V^{(\ell-1)}_i$. 
			
			\begin{claim}\label{claim:forcing}
				Let $1 \leq \ell \leq k$, $1 \leq i \leq p$ and $x \in V^{(\ell)}_i$. Then with probability at least $1 - (2^{\ell} - 2)e^{-\delta t}$ over the choice of $T_1,\dots,T_{\ell-1}$, the following holds: 
				if $(W_1,\dots,W_q)$ is a $(1,2)$-monochromatic partition of $S \cup T_1 \cup \dots \cup T_{\ell-1} \cup \{x\}$ which extends $(U_1,\dots,U_p)$, then $x \in W_i$. 
			\end{claim}
			\begin{proof}
				We prove the statement by induction on $\ell$. For $\ell = 1$, it follows immediately from the definition of the set $V^{(1)}_i$ that if $(W_1,\dots,W_q)$ is a $(1,2)$-monochromatic partition of $S \cup \{x\}$ which extends $(U_1,\dots,U_p)$, then $x \in W_i$ (with probability $1$). Let now $\ell \geq 2$, and let $x \in V^{(\ell)}_i$. If $x \in V^{(\ell-1)}_i$, then the assertion follows from the induction hypothesis. 
				Otherwise, either there are at least $\delta n$ edges of color $3$ between $x$ and $V^{(\ell-1)}_i$, or for each color $c \in \{1,2\}$, there are at least $\delta n$ edges of color $c$ between $x$ and $V^{(\ell-1)}_i$. We will assume that the latter case holds; the former case can be handled similarly (and more easily).
				So for each $c = 1,2$, let $Y_c \subseteq V^{(\ell-1)}_i$ be a set of at least $\delta n$ vertices which are connected to $x$ in color $c$.
				The probability that $T_{\ell-1} \cap Y_1 = \emptyset$ or $T_{\ell-1} \cap Y_2 = \emptyset$ is at most $2(1 - \delta)^{t} \leq 2e^{-\delta t}$. Suppose that $T_{\ell-1} \cap Y_1 \neq \emptyset$ and $T_{\ell-1} \cap Y_2 \neq \emptyset$, and fix vertices $y_c \in T_{\ell-1} \cap Y_c$, $c = 1,2$. 
				By the induction hypothesis, with probability at least $1 - 2 \cdot (2^{\ell-1} - 2)e^{-\delta t} = 1 - (2^{\ell} - 4)e^{-\delta t}$ over the choice of $T_1,\dots,T_{\ell-2}$, the following holds: for every $(1,2)$-monochromatic partition $(W_1,\dots,W_q)$ of $S \cup T_1 \cup \dots \cup T_{\ell-2} \cup \{y_1,y_2\}$ which extends $(U_1,\dots,U_p)$, it holds that $y_1,y_2 \in W_i$. 
				Assume that this event holds. 
				Let $(W_1,\dots,W_q)$ be a $(1,2)$-monochromatic partition of $S \cup T_1 \cup \dots \cup T_{\ell-1} \cup \{x\}$ which extends $(U_1,\dots,U_p)$. We have that $y_1,y_2 \in W_i$ and $\{x,y_1\},\{x,y_2\}$ have different colors. Hence, $x$ must be in $W_i$ as well. The probability that this fails is at most 
				$2e^{-\delta t} + (2^{\ell} - 4)e^{-\delta t} = (2^{\ell} - 2)e^{-\delta t}$, as required. 
			\end{proof}
 			
 			\begin{claim}\label{claim:balanced}
 				Let $1 \leq \ell \leq k$ and $1 \leq i \leq p$ and suppose that $|V^{(\ell)}_i| \geq (1 - \frac{\varepsilon}{2})n$. Then $\mathbb{P}[\mathcal{A}] \ll s^{-s}$. 
 			\end{claim}
 			\begin{proof}
 				For convenience, let us assume that $i = 1$. Fix a vertex $u \in U_2$. By our assumption, there are at least $\varepsilon (n-1) - |S| \geq \frac{3\varepsilon}{4}n$ vertices $x \in V(G) \setminus S$ such that the color of $\{x,u\}$ is not $c_{1,2}$. 
 				Since $|V^{(\ell)}_1| \geq (1 - \frac{\varepsilon}{2})n$, at least $\frac{\varepsilon}{4}n$ of these vertices are in $V^{(\ell)}_1$. 
 				The probability that $T_k$ contains no such vertex $x \in V^{(\ell)}_1$ is at most $(1 - \frac{\varepsilon}{4})^t \leq e^{-\varepsilon t/4}$. Suppose that $T_k$ contains such a vertex $x$. By Claim \ref{claim:forcing}, with probability at least $1 - (2^k - 2) e^{-\delta t}$ over the choice of $T_1,\dots,T_{k-1}$, it holds that if $(W_1,\dots,W_q)$ is a $(1,2)$-monochromatic partition of $S \cup T_1 \cup \dots \cup T_{k-1} \cup \{x\}$ extending $(U_1,\dots,U_p)$, then $x \in W_1$. Assume that this event happens; we show that then $\mathcal{A}$ fails. Indeed, suppose by contradiction that $(W_1,\dots,W_q)$ is a $(1,2)$-monochromatic partition of $R = S \cup T_1 \cup \dots \cup T_k$ extending $(U_1,\dots,U_p)$. We have $x \in T_k$, so $x \in W_1$. However, the color of $\{x,u\}$ is not $c_{1,2}$, contradicting the fact that the bipartite graph $(W_1,W_2)$ is monochromatic with color $c_{1,2}$. It follows that 
 				$
 				\mathbb{P}[\mathcal{A}] \leq e^{-\varepsilon t/4} + (2^k - 2) e^{-\delta t} \leq 2^k e^{-\delta t} \ll s^{-s}$, where the last inequality holds by \eqref{eq:s,t}. This proves the claim. 
 			\end{proof}
 		
 			\begin{claim}\label{claim:approximate_monochromatic}
 				Let $1 \leq \ell \leq k$ and $1 \leq i < j \leq p$, and suppose that there are at least $\delta n^2$ edges between $V^{(\ell)}_i$ and $V^{(\ell)}_j$ whose color is not $c_{i,j}$. Then $\mathbb{P}[\mathcal{A}] \ll s^{-s}$.
 			\end{claim}
 			\begin{proof}
 				The probability that $T_k$ contains no edge $\{v_i,v_j\} \in E(V^{(\ell)}_i,V^{(\ell)}_j)$ with $c(v_i,v_j) \neq c_{i,j}$, is at most $(1 - 2\delta)^{t/2} \leq e^{-\delta t}$. Suppose that $T_k$ contains such an edge $\{v_i,v_j\}$. By applying Claim \ref{claim:forcing} to $v_i$ and $v_j$, we get the following: with probability at least $1 - 2 \cdot (2^k - 2) e^{-\delta t}$ over the choice of $T_1,\dots,T_{k-1}$, it holds that if $(W_1,\dots,W_q)$ is a $(1,2)$-monochromatic partition of $S \cup T_1 \cup \dots \cup T_{k-1} \cup \{v_i,v_j\}$ extending $(U_1,\dots,U_p)$, then $v_i \in W_i$ and $v_j \in W_j$. But $c(v_i,v_j) \neq c_{i,j}$, contradicting the fact that the bipartite graph $(W_i,W_j)$ should be monochromatic in color $c_{i,j}$. The probability of failure is at most 
 				$e^{-\delta t} + 2 \cdot (2^k - 2) e^{-\delta t} \leq 2 \cdot 2^k e^{-\delta t} \ll s^{-s}$, where the last inequality holds by \eqref{eq:s,t}. 
 			\end{proof}
 			
 			Put $V^{(\ell)} := V^{(\ell)}_1 \cup \dots \cup V^{(\ell)}_p$. By construction, we have $V^{(\ell)} \subseteq V^{(\ell+1)}$ for every $\ell \geq 1$. By our choice of $k$, there must be some $1 \leq \ell \leq k-1$ such that $|V^{(\ell+1)}| \leq |V^{(\ell)}| + \frac{\varepsilon^2}{128}n$. From now on we fix such an $\ell$. 
 			By Claims \ref{claim:balanced} and \ref{claim:approximate_monochromatic}, we may assume that: 
 			\begin{enumerate}
 				\item[(a)] $|V^{(\ell)}_i| \leq (1 - \frac{\varepsilon}{2})n$ for every $1 \leq i \leq p$.
 				\item[(b)] For every pair $1 \leq i < j \leq p$,
 				all but at most $\delta n^2$ of the edges between $V^{(\ell)}_i$ and $V^{(\ell)}_j$ \nolinebreak have \nolinebreak color \nolinebreak $c_{i,j}$.
 			\end{enumerate}
 			We would like the sets $V^{(\ell)}_1,\dots,V^{(\ell)}_p$ to be pairwise disjoint; to this end, if an element belongs to several of these sets, then we place it in one of them arbitrarily, removing it from all others. Items (a)-(b) continue to hold. This also does not change $V^{(\ell)}$. 
 			
 			Recall that 
			$|V^{(\ell)}| \geq |V^{(1)}| \geq \frac{\varepsilon}{4}n$. 
 			Put $V' := V^{(\ell+1)} \setminus V^{(\ell)}$
 			and note that $|V'| \leq \frac{\varepsilon^2}{128}n$ by our choice of $\ell$. Also, set $X := V(G) \setminus (S \cup V^{(\ell)} \cup V') = V(G) \setminus (S \cup V^{(\ell+1)})$. Observe that by definition, if 
			$x \in X$ 
 			then for every $1 \leq i \leq p$, all but at most $2\delta n$ of the edges between $x$ and $V^{(\ell)}_i$ have the same color $c_{x,i} \in \{1,2\}$ (because $x \notin V^{(\ell+1)}$). 
 			Consider the partition $\mathcal{P}$ of $V(G)$ having the following parts: $V^{(\ell)}_1,\dots,V^{(\ell)}_p$; $V' \cup S$; and the vertices of 
 			$X$ as singletons.  
			We claim that $e(\mathcal{P}) \geq \frac{\varepsilon}{16}n^2$. Indeed, first note that $|V^{(\ell)}| + |X| = n - |V' \cup S| \geq (1 - \frac{\varepsilon}{8})n$, say. 
			If 
			$|X| \geq \frac{\varepsilon}{8}n$ 
			then $e(\mathcal{P}) \geq |V^{(\ell)}| \cdot |X| \geq 
			(n - \frac{\varepsilon}{8}n - |X|) \cdot |X| \geq 
			(n - \frac{\varepsilon}{4}n) \cdot \frac{\varepsilon}{8}n \geq \frac{\varepsilon}{16}n^2$.
			On the other hand, if 
			$|X| \leq \frac{\varepsilon}{8}n$ then 
			$|V^{(\ell)}| \geq (1 - \frac{\varepsilon}{4})n$. Now, as $|V^{(\ell)}_i| \leq (1 - \frac{\varepsilon}{2})n$ for every $1 \leq i \leq p$, we get from Lemma \ref{lem:balanced} (with parameters $m = (1 - \frac{\varepsilon}{4})n$ and $d = \frac{\varepsilon}{4}n$) that 
			$e(\mathcal{P}) \geq 
			\sum_{1 \leq i < j \leq p}{|V^{(\ell)}_i| \cdot |V^{(\ell)}_j|} \geq 
			\frac{\varepsilon}{4}n \cdot \frac{(1 - \frac{\varepsilon}{2})n}{2} \geq
			\frac{\varepsilon}{16}n^2$.
			So indeed $e(\mathcal{P}) \geq\frac{\varepsilon}{16}n^2$ in both cases. 
 			
 			We now modify at most $\varepsilon \cdot e(\mathcal{P})$ of the edges in $E(\mathcal{P})$ to turn $\mathcal{P}$ into a $(1,2)$-monochromatic partition.
 			The changes we make are as follows:
 			\begin{itemize}
 				\item For every $1 \leq i < j \leq p$, make $(V^{(\ell)}_i,V^{(\ell)}_j)$ monochromatic in color $c_{i,j}$. This is a total of at most $\binom{p}{2} \cdot \delta n^2 \leq s^2 \delta n^2 = \frac{\varepsilon^2}{64}n^2$ edge changes altogether.
 				\item For each $x \in X$ and $1 \leq i \leq p$, make all edges between $x$ and $V^{(\ell)}_i$ have the same color $c_{x,i} \in \{1,2\}$; this can be done with at most $2\delta n$ edge changes. Thus, this step requires at most $n \cdot p \cdot 2\delta n \leq 2s\delta n^2 \leq \frac{\varepsilon^2}{64}n^2$ edge changes altogether. 
 				\item Change the color of all color-$3$ edges inside $X$. Recall that all but at most $\frac{\varepsilon^2}{128} n$ of the vertices in $X$ are in $\text{SMALL}_3$, because $X \cap V^{(1)} = \emptyset$ and as $E_1$ happened.
 				Recall also that each vertex in $\text{SMALL}_3$ is incident to at most $\frac{\varepsilon^2}{128} n$ edges of color $3$. Hence, this step requires at most $\frac{\varepsilon^2}{128} n \cdot |X| + |X| \cdot \frac{\varepsilon^2}{128} n \leq \frac{\varepsilon^2}{64} n^2$ edge changes.
 				\item Color all edges between $V' \cup S$ and $V(G) \setminus (V' \cup S)$ with color $1$ (say). This step requires at most $|V' \cup S| \cdot n \leq \frac{\varepsilon^2}{64}n^2$ edge changes. 
 			\end{itemize}
 			The total number of edge changes in the above four items is at most $\frac{\varepsilon^2}{16} n^2 \leq \varepsilon \cdot e(\mathcal{P})$. 
 			After these changes, $\mathcal{P}$ is $(1,2)$-monochromatic.
 			This proves the main claim. 
		\end{proof}
		Let us now complete the proof of Lemma \ref{lem:approximate partition} using the main claim. Suppose by contradiction that there is no partition $\mathcal{P}$ of $V(G)$ as in the statement of the lemma. Then by the main claim, and as $\mathbb{P}[E_1] \geq 3/4$, we have the following: with probability larger than $1/2$, there does not exist an $(a,b)$-monochromatic partition $S = U_1 \cup \dots \cup U_p$, where $p \geq 2$, and an $(a,b)$-monochromatic partition of $R$ which extends $(U_1,\dots,U_p)$. On the other hand, we saw that such partitions $S = U_1 \cup \dots \cup U_p$ and $R = W_1 \cup \dots \cup W_q$ do exist if $E_0$ and $E_2$ happen, which has probability at least $1/2$ as $\mathbb{P}[E_0],\mathbb{P}[E_2] \geq 3/4$. This contradiction completes the proof. 
	\end{proof}

	\begin{proof}[Proof of Theorem \ref{thm:Gallai_coloring}]
		We decompose $G$ by repeatedly applying Lemma \ref{lem:approximate partition}. It is convenient to describe the decomposition using a tree, where each node corresponds to a subset of $V(G)$. The root is $V(G)$. At each step, if there is a leaf $X$ with $|X| \geq \varepsilon n$, then apply Lemma \ref{lem:approximate partition} to $G[X]$. As $\varepsilon^{33}|X|^3 \geq \varepsilon^{36}n^3$, we know that $G[X]$ contains at most $\varepsilon^{33}|X|^3$ rainbow triangles. Thus, Lemma \ref{lem:approximate partition} gives a partition $\mathcal{P}_X$ of $X$ which is $\varepsilon$-close to being $(a,b)$-monochromatic, for some two colors $a,b \in [3]$. Now add all the sets $Y \in \mathcal{P}_X$ as children of $X$. When this process stops, every leaf is of size at most $\varepsilon n$. For each non-leaf $X$, turn $\mathcal{P}_X$ into an $(a,b)$-monochromatic partition (for the two suitable colors $a,b$) by changing the colors of at most $\varepsilon \cdot e(\mathcal{P})$ of the edges in $E(\mathcal{P})$. This requires in total at most $\varepsilon\binom{n}{2} \leq \varepsilon n^2/2$ edge changes altogether. Next, for each leaf $X$, make $G[X]$ rainbow-triangle-free. This requires at most 
		$$
		\sum_{X \text{ leaf}}{\binom{|X|}{2}} \leq \frac{\varepsilon n - 1}{2} \cdot
		\sum_{X \text{ leaf}}{|X|} \leq \varepsilon n^2/2
		$$ 
		additional edge changes. So the total number of edge-changes is at most $\varepsilon n^2$. After these edge-changes, the resulting $3$-colored complete graph has no rainbow triangles, by the ``conversely'' part of Lemma \ref{lem:Gallai}. This completes the proof. 
	\end{proof}

\section{Lower Bounds}\label{sec:hard}
In this section we prove the ``only if'' parts of Theorems \ref{thm:digraphs} and \ref{thm:3-colored_graphs} and of Proposition \ref{prop:k-colored}. 
The proofs use Behrend-type constructions, similarly to \cite{Alon,AS_induced}. Due to this similarity, we will be somewhat brief.  
We need the following simple lemma. 
%
%

\begin{lemma}\label{lem:design}
	For $d \geq 2$ and $r \geq 2d$, there is a collection $R \subseteq [r]^d$, $|R| \geq  (r/2)^2$, such that any two $d$-tuples in $R$ agree on at most one coordinate. 
\end{lemma}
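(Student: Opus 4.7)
The plan is to give an explicit Reed--Solomon-style construction using linear polynomials over a prime-order field. Invoke Bertrand's postulate to select a prime $p$ with $r/2 < p \leq r$; this is possible because $r \geq 2d \geq 4$, and the hypothesis $r \geq 2d$ together with $p > r/2$ also guarantees $p > d$. For each pair $(a,b) \in \mathbb{Z}_p \times \mathbb{Z}_p$, define a $d$-tuple $\mathbf{v}_{a,b}$ whose $i$-th coordinate is $(ai+b) \bmod p$, viewed as an element of $\{0,1,\ldots,p-1\} \subseteq [r]$ (after a harmless shift by $1$). Let $R$ be the collection of all such $\mathbf{v}_{a,b}$.

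The key step is verifying the ``at most one agreement'' property. Suppose $\mathbf{v}_{a,b}$ and $\mathbf{v}_{a',b'}$ agree on two distinct coordinates $i,j \in [d]$. Then $(a-a')i \equiv b'-b \pmod{p}$ and $(a-a')j \equiv b'-b \pmod{p}$, so $(a-a')(i-j) \equiv 0 \pmod{p}$. Since $0 < |i-j| \leq d-1 < p$, the factor $i-j$ is a unit in $\mathbb{Z}_p$, forcing $a=a'$, and then $b=b'$. This simultaneously shows that the map $(a,b) \mapsto \mathbf{v}_{a,b}$ is injective (so $|R| = p^2 > (r/2)^2$) and that any two distinct elements of $R$ agree on at most one coordinate.

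I do not expect any serious obstacle. The only subtlety is the existence of a prime $p$ in the interval $(r/2, r]$, which is a routine application of Bertrand's postulate given $r \geq 4$. The entire ``cleverness'' is reduced to the familiar fact that a nonzero linear polynomial over $\mathbb{Z}_p$ has at most one root, which makes the agreement condition trivial. An alternative parametrization (e.g.\ restricting $(a,b)$ to $[\lceil r/2 \rceil]^2$ and working in $\mathbb{Z}$ with some care) could avoid invoking Bertrand's postulate, but the prime-field approach is cleaner and yields the bound with room to spare.
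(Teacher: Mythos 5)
Your proof is correct and follows essentially the same route as the paper: pick a prime $p\in(r/2,r]$ by Bertrand's postulate and take the evaluation vectors of all linear polynomials over $\mathbb{F}_p$, using $p>r/2\ge d$ to ensure that agreement in two coordinates forces the polynomials to coincide. The only difference is the cosmetic reparametrization $ai+b$ versus the paper's $a+(i-1)b$.
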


\begin{proof}
	Let $p$ be a prime such that $r/2< p\leq r$; such a prime exists by Bertrand's postulate. For $a,b \in \mathbb{F}_p$, let $x_{a,b} \in \mathbb{F}_p^d$ be the $d$-tuple $x_{a,b}(i) = a + (i-1)b$, $i = 1,\dots,d$. 
	Observe that for $(a_1,b_1) \neq (a_2,b_2)$, there is at most one $1 \leq i \leq d$ with $x_{a_1,b_1}(i) = x_{a_2,b_2}(i)$. Indeed, if there are two such $1 \leq i < j \leq d$, then $a_1 + (i-1)b_1 = a_2 + (i-1)b_2$ and $a_1 + (j-1)b_1 = a_2 + (j-1)b_2$. Solving this system of equations gives $a_1 = a_2$ and $b_1 = b_2$, a contradiction. Here we use the fact that $i \not\equiv j \pmod{p}$, which follows from $p > r/2 \geq d$. 
\end{proof}

\begin{lemma}\label{lem:RS_triangle}
	Let $k \geq 2$, let $F$ be a $k$-colored complete graph, and suppose that there is a triangle in $F$ whose edges avoid one of the colors. Then for every small enough $\varepsilon > 0$ and large enough $n$, there is an $n$-vertex $k$-colored complete graph $G$ which contains $\varepsilon n^2$ pair-disjoint copies of $F$, but only $\varepsilon^{\Omega(\log1/\varepsilon)}n^{v(F)}$ copies of $F$ altogether. 
\end{lemma}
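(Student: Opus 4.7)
The plan is to use a Behrend-type construction, in the spirit of the original Ruzsa-Szemer\'edi triangle construction but extended to the $k$-colored $F$-copy setting using the arithmetic-progression design furnished by Lemma~\ref{lem:design}. Assume WLOG that the triangle in $F$ avoiding color $c_0$ sits on vertex set $\{1,2,3\}$ with edge colors $c_{12}, c_{13}, c_{23} \in [k] \setminus \{c_0\}$, and write $f := v(F)$. For a prime $p$, to be chosen later depending on $\varepsilon$, let $B \subseteq \mathbb{F}_p$ be a Behrend set of size $|B| = p \cdot e^{-\Theta(\sqrt{\log p})}$ with no 3-term arithmetic progressions. Build $G$ on $n = fp$ vertices partitioned as $V_1 \sqcup \cdots \sqcup V_f$, each $V_i$ a copy of $\mathbb{F}_p$; color edges inside any part with $c_0$; color edges between two of the ``triangle parts'' $V_1, V_2, V_3$ with the corresponding $c_{ij}$ when the relevant difference (or half-difference in the $V_1$--$V_3$ case) lies in $B$, and with $c_0$ otherwise; and color edges between any other pair of parts $V_i, V_j$ with the $F$-color $c(i,j)$.

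The pair-disjoint copies come directly from Lemma~\ref{lem:design} applied with $d = f$ and $r = p$: take the restricted family $\{x_{a,b} : a \in \mathbb{F}_p,\, b \in B\}$ with $x_{a,b}(i) = a + (i-1)b$. Each such tuple embeds $F$ into $G$, because its first three coordinates $(a, a+b, a+2b)$ form a Behrend triple so the three triangle edges of $G$ realize $(c_{12}, c_{13}, c_{23})$, while all remaining edges already carry their prescribed $F$-colors. By Lemma~\ref{lem:design} any two distinct tuples in the family agree on at most one coordinate, so the corresponding copies share at most one vertex; this yields $p \cdot |B|$ pair-disjoint $F$-copies.

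To control the total count, observe that an aligned embedding $(v_1, \ldots, v_f) \in V_1 \times \cdots \times V_f$ forces $v_2 - v_1,\, v_3 - v_2,\, (v_3 - v_1)/2 \in B$, a 3-term AP in $B$ which by the no-3-AP property collapses to $v_2 - v_1 = v_3 - v_2$. Hence the first three coordinates form a Behrend triple ($|B| \cdot p$ choices) and the remaining $f-3$ are free, giving $|B| \cdot p^{f-2}$ aligned copies. Misaligned embeddings $(v_i \in V_{\pi(i)})$ for non-identity $\pi$ can be shown by case analysis to contribute at most an $O_f(1)$ additional factor: $F$'s color constraints on non-triangle-parts edges either force the three triangle-parts edges of the embedding to all be Behrend (reducing to the same 3-AP collapse) or generate a direct contradiction; summing over the finitely many permissible $\pi$ gives a total of $O_f(|B| \cdot p^{f-2})$ copies.

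Finally, choose $p$ so that $|B|/p = \Theta(\varepsilon)$; Behrend's bound then forces $p = \varepsilon^{-\Theta(\log(1/\varepsilon))}$, hence $1/p = \varepsilon^{\Theta(\log(1/\varepsilon))}$. A routine computation gives $\Theta(\varepsilon n^2)$ pair-disjoint copies (rescaling $\varepsilon$ by an absolute constant absorbs the $f^2$ factor) and a total of $O_f(|B| \cdot p^{f-2}) = O(\varepsilon/p) \cdot n^f = \varepsilon^{\Omega(\log(1/\varepsilon))} \cdot n^f$, matching the lemma. The main technical obstacle to watch out for is achieving pair-disjointness across all $f$ coordinates of each copy --- not only the three triangle coordinates --- and Lemma~\ref{lem:design} handles this cleanly by providing an $f$-dimensional arithmetic-progression design that, restricted to $b \in B$, simultaneously encodes the Behrend-triple structure needed for the triangle edges.
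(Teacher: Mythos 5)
Your approach is genuinely different from the paper's, and the difference is where the argument breaks. The paper colors \emph{only} the edges participating in the $m|S|$ planted copies $F_{x,s}$ with the $F$-colors, and colors every other edge of $H$ (including all edges between $V_i,V_j$ for $\{i,j\}\not\subseteq\{1,2,3\}$) with the forbidden color $k$. You instead color \emph{every} edge between $V_i$ and $V_j$, for each non-triangle pair $\{i,j\}$, with the fixed $F$-color $c_F(i,j)$. This dense ``blow-up of $F$'' structure on the non-triangle parts creates spurious copies of $F$ that your 3-AP argument does not see.

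Concretely, the claim that misaligned embeddings contribute only an $O_f(1)$ factor is false. Take $k=2$, $c_0=2$, $f=5$, and let $F$ on vertex set $\{1,\dots,5\}$ have $c_F(1,2)=c_F(1,3)=c_F(2,3)=c_F(1,4)=c_F(2,4)=1$ and all remaining edges (i.e.\ $\{3,4\}$ and every edge meeting $5$) of color $2$. The triangle $\{1,2,3\}$ avoids color $2$, so the hypothesis holds. In your construction, $V_1$--$V_4$ and $V_2$--$V_4$ are uniformly color $1$, $V_3$--$V_4$ and all edges meeting $V_5$ are uniformly color $2$, and edges inside parts are color $2$. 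Now map $F$-vertices $1\mapsto v_1\in V_1$, $2\mapsto v_2\in V_2$, $3\mapsto v_3\in V_4$, $4\mapsto v_4\in V_4$, $5\mapsto v_5\in V_5$ (with $v_3\ne v_4$). Every required edge color is realized by the fixed colors above, except for $\{v_1,v_2\}$, which must be color $1$, i.e.\ $v_2-v_1\in B$. So the number of such copies is at least $p\cdot|B|\cdot p(p-1)\cdot p=\Theta(|B|p^4)=\Theta(\varepsilon p^5)=\Theta(\varepsilon n^5)$, which is only polynomially small in $\varepsilon$, not $\varepsilon^{\Omega(\log 1/\varepsilon)}$. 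The Behrend structure on $V_1,V_2,V_3$ is simply bypassed because only two of the three triangle parts are used and the third triangle vertex is absorbed into a fixed-color part.

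This is exactly the phenomenon the paper's sparser coloring is designed to prevent: by coloring every non-planted edge with $c_0$, the only triangles avoiding $c_0$ in $H$ are genuinely 3-AP-constrained, and hence there are only $O(f^4 m^2)$ of them in total; every copy of $F$ must contain such a triangle, so the total is $O(f^4 m^2\cdot n^{f-3})$ after the blow-up. If you want to salvage your construction you would have to delete (recolor to $c_0$) the vast majority of edges between non-triangle parts so that the $F$-copies are again controlled by the planted family, which brings you back to the paper's construction. As a smaller point, your appeal to Lemma~\ref{lem:design} is also slightly off: you use the $b\in B$ sub-family of $\{x_{a,b}\}$ directly, rather than the lemma's conclusion (which is stated for the full $\mathbb{F}_p^2$ family); this is easy to fix, but it does mean Lemma~\ref{lem:design} is doing less work here than you claim, since the design-type computation must be redone for the restricted family.
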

\begin{proof}
	Put $f = v(F)$ and suppose that $V(F) = [f]$. Without loss of generality, suppose that $F$ has a triangle whose edges avoid the color $k$. 
	By \cite[Lemma 4.1]{AS_induced}, for every $m \geq 1$, there is a set $S \subseteq [m]$ of size at least $m/e^{C\sqrt{\log m}}$, such that for all 
	$1 \leq p,q \leq f-1$, there is no solution to $p s_1 + q s_2 = (p+q) s_3$ with distinct $s_1,s_2,s_3 \in S$.
	Let $m$ be the maximal integer satisfying $e^{-C\sqrt{\log m}} \geq 4f^4\varepsilon$. It is easy to check that $m \geq (1/\varepsilon)^{\Omega(\log 1/\varepsilon)}$. Let $S \subseteq [m]$ be as above; so $|S| \geq 4f^4 \varepsilon m$.  
	Define a $k$-colored complete graph $H$ consisting of $f$ disjoint sets $V_1,\dots,V_f$, where $V_i = [i \cdot m]$. So $v(H) = \binom{f+1}{2}m$, and hence $f^2m/2 \leq v(H) \leq f^2m$. 
	For each $x \in [m]$ and $s \in S$, add a copy $F_{x,s}$ of $F$ in which $v_i := x + (i-1)s \in V_i$ plays the role of $i$ for every $i \in [f] = V(F)$. All edges in $H$ which do not belong to any of the copies $F_{x,s}$ (in particular, the edges inside the sets $V_1,\dots,V_f$) are colored with color $k$.  
	We claim that the copies $F_{x,s}$, $(x,s) \in [m] \times S$, are pair-disjoint.  
	Indeed, if $F_{x_1,s_1}$ and $F_{x_2,s_2}$ have the same vertex in $V_i$ and $V_j$, then $x_1 + (i-1)s_1 = x_2 + (i-1)s_2$ and $x_1 + (j-1)s_1 = x_2 + (j-1)s_2$. Solving this system of equations, we get that $x_1 = x_2$ and $s_1 = s_2$. So we conclude that the copies $F_{x,s}$ are indeed pair-disjoint. The number of these copies is $m|S| \geq 4f^4 \varepsilon m^2$.
	
	Next, we bound the number of triangles in $H$ which avoid the color $k$. 
	Such a triangle cannot contain two vertices from the same $V_i$, since the edges inside $V_1,\dots,V_f$ are colored with color $k$. Let $1 \leq a < b < c \leq f$, and let $x \in V_a, y \in V_b, z \in V_c$ be a triangle avoiding the color $k$. By construction, there are $s_1,s_2,s_3 \in S$ such that 
	$y - x = (b - a)s_1$, $z - y = (c - b)s_2$ and $z - x = (c - a)s_3$. So, setting $p := b - a$ and $q := c - b$, we have $ps_1 + qs_2 = (p + q)s_3$. By our choice of $S$, we have $s_1 = s_2 = s_3 =: s$. So each such triangle is determined by the choice of $x \in V_a$ and $s$. There are $|V_a| = a \cdot m \leq f \cdot m$ choices for $x$ and $|S| \leq m$ choices for $s$. Hence, the total number of triangles
	in $H$ avoiding the color $k$ is at most $\binom{f}{3} \cdot fm \cdot |S| \leq f^4m^2$.  
	
	Now let $G$ be the $\frac{n}{v(H)}$-blowup of $H$, where all edges inside the blowup of each $V_i$ are colored with $k$. Each copy of $F$ in $H$ gives rise to $(\frac{n}{2v(H)})^2$ pair-disjoint copies of $F$ in $G$, by Lemma \ref{lem:design} with parameters $r=\frac{n}{v(H)}$ and $d = f$. Hence, $G$ contains a collection of 
	$4f^4\varepsilon m^2 \cdot (\frac{n}{2v(H)})^2 \geq \varepsilon n^2$ pair-disjoint copies of $F$. To complete the proof, we bound the total number of copies of $F$ in $G$. Each copy of $F$ must contain a triangle which avoids the color $k$. Each triangle avoiding the color $k$ in $G$ must come from a triangle avoiding the color $k$ in $H$. The number of such triangles in $H$ is at most $f^4m^2$, and each of these triangles in $H$ gives rise to $(\frac{n}{v(H)})^3$ such triangles in $G$. Hence, the total number of triangles in $G$ avoiding the color $k$ is at most $f^4m^2 \cdot (\frac{n}{v(H)})^3 \leq f^4m^2 \cdot (\frac{2n}{f^2m})^3 \leq \frac{n^3}{m}$. It follows that the number of copies of $F$ in $G$ is at most $\frac{n^3}{m} \cdot n^{f-3} = \frac{n^f}{m} \leq \varepsilon^{\Omega(\log 1/\varepsilon)} \cdot n^f$, as required. 
\end{proof}
Lemma \ref{lem:RS_triangle} immediately implies that for $k \geq 4$, every $k$-colored complete graph with at least $3$ vertices is hard. For $k = 3$, observe that if $F$ is a $3$-colored complete graph and two of the edges incident to some $v \in V(F)$ have the same color, then $F$ has a triangle avoiding one of the colors. If $v(F) \geq 5$ then every vertex $v$ is like that (since there are at least 4 edges incident to $v$ and only 3 colors). If $v(F) = 3$ then such a vertex exists unless $F$ is a rainbow triangle. And if $v(F) = 4$ then such a vertex exists unless each color spans a matching of size $2$. Let $F_4$ denote this $3$-colored complete graph; namely, $V(F_4) = \{a_1,a_2,a_3,a_4\}$; $\{a_1,a_2\},\{a_3,a_4\}$ have color $1$; $\{a_1,a_4\},\{a_2,a_3\}$ have color $2$; and $\{a_1,a_3\},\{a_2,a_4\}$ have color $3$.
To complete the proof of Theorem \ref{thm:3-colored_graphs}, we now describe a variant of the above construction suited for $F_4$. 
\begin{lemma}\label{lem:RS_F4}
	For every small enough $\varepsilon > 0$ and large enough $n$, there is an $n$-vertex $3$-colored complete graph $G$ which contains $\varepsilon n^2$ pair-disjoint copies of $F_4$, but only $\varepsilon^{\Omega(\log1/\varepsilon)}n^{4}$ copies of $F_4$ altogether.
\end{lemma}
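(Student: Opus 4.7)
The plan is to adapt the argument of Lemma \ref{lem:RS_triangle} to $F_4$. The difficulty is that every triangle of $F_4$ is rainbow, so there is no ``missing color'' to exploit; instead I use that each color class of $F_4$ is a perfect matching on its four vertices, and distil the structure of a potential $F_4$-copy into a single linear equation on four slope parameters, which I then defeat with a Behrend-type set.

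The first step is to produce $S\subseteq [m]$ of size $|S|\geq 4\cdot 4^{4}\varepsilon m$ (with $m\geq (1/\varepsilon)^{\Omega(\log 1/\varepsilon)}$) such that any solution in $S^4$ of
\[
3s_1+s_2\;=\;2s_3+2s_4
\]
forces $s_1=s_2=s_3=s_4$. A standard Behrend/sphere construction supplies such $S$: rewrite the equation as $\tfrac34\vec s_1+\tfrac14\vec s_2=\tfrac12\vec s_3+\tfrac12\vec s_4$ and restrict $S$ to lattice points on a sphere in $[b]^d$ with $b$ large enough to kill carries; strict convexity of $\|\cdot\|^2$ then forces $\vec s_1=\vec s_2$ and $\vec s_3=\vec s_4$, after which the equation itself collapses to $\vec s_1=\vec s_3$. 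Next I build a $3$-colored graph $H$ on four classes $V_i=[i\cdot m]$, plant a copy $F_{x,s}$ of $F_4$ on the quadruple $v_i=x+(i-1)s\in V_i$ for each $(x,s)\in [m]\times S$ using the canonical $F_4$-coloring, and color every remaining edge of $H$ (including all in-$V_i$ edges) with color $1$. Pair-disjointness of the planted copies (by the usual ``solve the linear system'' argument) makes this well-defined. Finally, take $G$ to be the uniform blowup of $H$ to $n$ vertices, with intra-class edges colored $1$; by Lemma \ref{lem:design} applied to each planted copy, $G$ has at least $\varepsilon n^2$ pair-disjoint copies of $F_4$.

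To bound the total number of copies in $G$, I split a candidate $4$-set $W$ by the multiset of blowup-classes it meets. If two vertices of $W$ share a class, a short check on the four ``collapsed'' patterns ($2{+}1{+}1$, $2{+}2$, $3{+}1$, $4$) rules out $F_4$: either the number of forced color-$1$ edges is at least $3$, or two of the three perfect matchings of $K_4[W]$ are forced into the same non-$1$ color, both incompatible with $F_4$. So every $F_4$-copy uses one vertex from each of four distinct classes, and descends to a copy in $H$ on $u_i\in V_i$. The matching $\{\{u_1,u_2\},\{u_3,u_4\}\}$ is always color $1$; between $V_1$-$V_3$ and $V_2$-$V_4$ only colors $\{1,3\}$ occur, while between $V_1$-$V_4$ and $V_2$-$V_3$ only $\{1,2\}$ occur. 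Hence, up to the Klein-four automorphism of $F_4$, the remaining two matchings must be color $3$ and color $2$ respectively, which plants all four non-$1$ edges and gives $(u_3-u_1)/2,\,(u_4-u_2)/2,\,(u_4-u_1)/3,\,u_3-u_2\in S$. These quantities identically satisfy $3s_1+s_2=2s_3+2s_4$, so by the Behrend property they all coincide, making $(u_1,\dots,u_4)$ a planted copy. Hence $H$ contains exactly $m|S|$ copies of $F_4$, and $G$ at most $m|S|\cdot(n/v(H))^4\leq n^4/m\leq \varepsilon^{\Omega(\log 1/\varepsilon)}n^4$.

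The main obstacle is the color-palette bookkeeping that reduces the entire upper bound to a single linear equation: the restricted colorings on each pair of classes must leave only one $F_4$-labeling viable (up to Klein-four), and all four planting constraints must collapse into one identity. Once this is isolated, the Behrend/sphere construction for that equation is routine and the rest of the proof parallels Lemma \ref{lem:RS_triangle}.
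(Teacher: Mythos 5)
Your construction is very close to the paper's, but your choice of ``fill'' color is $1$, whereas the paper uses color $3$, and this turns out to matter crucially. With fill color $3$, the bipartite graphs $(V_1,V_3)$ and $(V_2,V_4)$ are entirely monochromatic in color $3$ (planted and filler edges agree there), so for a candidate copy $u_1,\dots,u_4$ the two color-$3$ edges $\{u_1,u_3\},\{u_2,u_4\}$ impose no constraint at all; only the four color-$1$ and color-$2$ edges must come from planted copies, giving $u_2-u_1 = s_1$, $u_4-u_3 = s_2$, $u_3-u_2 = s_3$, $u_4-u_1 = 3s_4$, whence $s_1+s_2+s_3 = 3s_4$. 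That equation has a \emph{single} variable isolated on one side, which is exactly what Alon's Lemma 3.1 (Behrend's sphere argument) requires: the convex combination $\tfrac13(\vec s_1+\vec s_2+\vec s_3)$ equals $\vec s_4$, a point \emph{on} the sphere, and strict convexity then forces $\vec s_1 = \vec s_2 = \vec s_3$.

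With your fill color $1$, the monochromatic pairs are $(V_1,V_2)$ and $(V_3,V_4)$, so the four constrained edges are $\{u_1,u_3\},\{u_2,u_4\},\{u_1,u_4\},\{u_2,u_3\}$, and the resulting identity among the four slopes $s_1 = (u_3-u_1)/2$, $s_2 = (u_4-u_2)/2$, $s_3 = (u_4-u_1)/3$, $s_4 = u_3-u_2$ is $2s_1 + 2s_2 = 3s_3 + s_4$, not the $3s_1+s_2 = 2s_3+2s_4$ you wrote. (Plug in: the difference of the two sides of your equation is $\tfrac{1}{6}\bigl(9u_2 - 5u_1 - 3u_3 - u_4\bigr)$, which is not identically zero.) This is not merely a bookkeeping slip, because the correct equation has two variables with non-unit coefficients on \emph{each} side, and the sphere argument you invoke does not close for such equations. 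Writing it as $\tfrac12\vec s_1 + \tfrac12\vec s_2 = \tfrac34\vec s_3 + \tfrac14\vec s_4$, strict convexity only tells you that each side has norm at most the sphere radius $r$, with equality iff the corresponding pair coincides; but the two sides merely have to agree with \emph{each other}, not to lie on the sphere. Concretely, after expanding norms the constraint reduces to $|\vec s_1 - \vec s_2|^2 = \tfrac34\,|\vec s_3 - \vec s_4|^2$, and nothing in the sphere construction rules out nonzero solutions of that. So the claimed step ``strict convexity forces $\vec s_1 = \vec s_2$ and $\vec s_3 = \vec s_4$'' is a genuine gap. Switching the filler to color $3$, as the paper does, is precisely what collapses the system to a one-sided equation; the rest of your argument (the rainbow-triangle case analysis showing every $F_4$-copy takes one vertex per class, the pair-disjointness of the planted copies, the blow-up via Lemma \ref{lem:design}) then matches the paper and is correct.
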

\begin{proof}
	By \cite[Lemma 3.1]{Alon}, for every $m \geq 1$, there is a set $S \subseteq [m]$ of size at least $m/e^{C\sqrt{\log m}}$ containing no solution to $s_1 + s_2 + s_3 = 3s_4$ with distinct $s_1,s_2,s_3,s_4$.
	Let $m$ be the maximal integer satisfying $e^{-C\sqrt{\log m}} \geq 400\varepsilon$. It is easy to check that $m \geq (1/\varepsilon)^{\Omega(\log 1/\varepsilon)}$. Let $S \subseteq [m]$ be as above; so $|S| \geq 400\varepsilon m$.  
	Define a $3$-colored complete graph $H$ consisting of $4$ disjoint sets $V_1,V_2,V_3,V_4$, where $V_i = [i \cdot m]$; so $v(H) = 10m$. 
	For each $x \in [m]$ and $s \in S$, add a copy $F_{x,s}$ of $F_4$ on the vertices $v_i = x + (i-1)s \in V_i$, where $v_i$ plays the role of $a_i$ for each $1 \leq i \leq 4$. 
	All edges not participating in one of these copies are colored with color $3$. 
	Observe that all edges between $V_1$ and $V_3$ and between $V_2$ and $V_4$ have color $3$. 
	As before, the copies $F_{x,s}$ are pair-disjoint. Their number is $m|S| \geq 400 \varepsilon m^2$.
	
	Observe that if $F$ is a copy of $F_4$ in $H$, then $F$ must contain one vertex from each of the sets $V_1,\dots,V_4$. 
	Indeed,	note that for every pair $1 \leq i < j \leq 4$, the edges in $V_i \cup V_j$ use only two colors. So $|V(F) \cap (V_i \cup V_j)| \leq 2$ for all $i,j$ (since every triangle in $F_4$ is rainbow). Hence, $|V(F) \cap V_i| = 1$ for every $1 \leq i \leq 4$. It is now easy to see that every copy of $F_4$ in $H$ is of the form $v_1,\dots,v_4$, where $v_i \in V_i$ plays the role of $a_i$. Fix such a copy $v_1,\dots,v_4$. By construction, there must be $s_1,s_2,s_3,s_4$ such that 
	$v_2 - v_1 = s_1$, $v_3 - v_2 = s_2$, $v_4 - v_3 = s_3$ and $v_4 - v_1 = 3s_4$. So $s_1 + s_2 + s_3 = 3s_4$, and hence $s_1 = s_2 = s_3 = s_4$ by our choice of $S$. It follows that the number of copies of $F_4$ in $H$ is $m|S| \leq m^2$.
	
	Let $G$ be the $\frac{n}{v(H)}$-blowup of $H$, where all edges inside the blowup of each $V_i$ are colored with color $3$. Each copy of $F_4$ in $H$ gives rise to $(\frac{n}{2v(H)})^2$ pair-disjoint copies of $F_4$ in $G$ by Lemma \ref{lem:design}. Hence, $G$ contains a collection of 
	$400 \varepsilon m^2 \cdot (\frac{n}{2v(H)})^2 = \varepsilon n^2$ pair-disjoint copies of $F_4$. 
	Let us now upper-bound the total number of copies of $F_4$ in $G$. By the same argument as above, every copy of $F_4$ in $G$ must be of the form $v_1,\dots,v_4$ with $v_i$ belonging to the blowup of $V_i$ and playing the role of $a_i$ in the copy. So
	every copy of $F_4$ in $G$ corresponds to a copy of $F_4$ in $H$. On the other hand, every copy of $F_4$ in $H$ gives rise to $(\frac{n}{v(H)})^4$ copies of $F_4$ in $G$. So overall, there are at most $m^2 \cdot (\frac{n}{v(H)})^4 \leq \frac{n^4}{m} \leq \varepsilon^{\Omega(\log1/\varepsilon)}n^{4}$ copies of $F_4$ in $G$, as required.
\end{proof}

To complete the proof of Theorem \ref{thm:digraphs}, we need to handle the two digraphs $D$ whose corresponding $3$-colored complete graph $C(D)$ is the rainbow triangle. These digraphs are obtained from each other by reversing the direction of all edges. So by symmetry, it remains to handle just one of them. Let then $D_3$ be the digraph with vertices $a_1,a_2,a_3$ and edges $(a_1,a_3),(a_2,a_3),(a_3,a_2)$. 
\begin{lemma}
	For every small enough $\varepsilon > 0$ and large enough $n$, there is an $n$-vertex digraph $G$ which contains $\varepsilon n^2$ pair-disjoint induced copies of $D_3$, but only $\varepsilon^{\Omega(\log1/\varepsilon)}n^3$ induced copies of $D_3$ altogether.
\end{lemma}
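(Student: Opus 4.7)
The plan is to adapt the Behrend-type construction of Lemma~\ref{lem:RS_F4} to the digraph $D_3$. The delicate point is that $C(D_3)$ is the rainbow triangle, which by Theorem~\ref{thm:3-colored_graphs} is easily testable; so a hardness proof for $D_3$ must genuinely exploit the fact that $C$ is not injective, i.e.\ use the direction of the single edge of $D_3$. Take a $3$-AP-free Behrend set $S \subseteq [m]$ with $|S| \geq m e^{-C\sqrt{\log m}}$, and choose $m$ so that $|S| \geq 400\varepsilon m$, whence $m \geq (1/\varepsilon)^{\Omega(\log 1/\varepsilon)}$. Define $H$ on $V(H) = V_1 \sqcup V_2 \sqcup V_3$ with $V_i = [im]$. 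For each $(x,s) \in [m] \times S$ plant a copy of $D_3$ on $v_i := x + (i-1)s \in V_i$ by adding the single edge $v_1 \to v_3$ and the double edge $v_2 \leftrightarrow v_3$. Crucially, for every pair of distinct $(x_1,s_1), (x_2,s_2) \in [m] \times S$ with $x_1 + 2s_1 = x_2 + 2s_2$, add a \emph{double} ``breaking'' edge between $x_1 \in V_1$ and $x_2 + s_2 \in V_2$; all other pairs carry no edge.

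The main step is to verify that the planted copies are the only induced copies of $D_3$ in $H$. In $H$ the single edges are precisely the planted $V_1 \to V_3$ edges, and the double edges are precisely the planted $V_2 \leftrightarrow V_3$ edges and the breaking $V_1 \leftrightarrow V_2$ edges. In $D_3$ the vertex $a_3$ receives a single edge (from $a_1$) and has a double edge (with $a_2$); inspecting the parts shows that $V_1$ and $V_2$ receive no single incoming edge at all (breaking being doubled), so $a_3 \in V_3$, whence $a_1 \in V_1$ via the planted single edge and $a_2 \in V_2$ via the planted double edge. Writing $(a_1, a_3) = (x_1, x_1 + 2s_1)$ and $(a_2, a_3) = (x_2 + s_2, x_2 + 2s_2)$, either $(x_1,s_1) = (x_2,s_2)$ (giving the planted $D_3$) or the breaking edge placed at $(a_1, a_2)$ makes the triple non-induced. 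Conversely, a breaking edge landing on the non-edge $(x, x+s)$ of a planted copy $(x,s)$ would force $s + s_2 = 2s_1$ for some $s_1, s_2 \in S$, and a short calculation shows the three parameters would then be pairwise distinct, which is ruled out by the $3$-AP-freeness of $S$; so the planted copies survive as induced copies in $H$.

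To conclude, take $G$ to be the $\frac{n}{v(H)}$-blowup of $H$ with no intra-class edges. Since intra-class pairs carry no edge, every induced copy of $D_3$ in $G$ projects to an induced copy in $H$, i.e.\ to a planted one, so the total number of induced copies of $D_3$ in $G$ is at most $m|S| \cdot (n/v(H))^3 \leq n^3/m \leq \varepsilon^{\Omega(\log 1/\varepsilon)} n^3$. On the other hand, applying Lemma~\ref{lem:design} to each planted copy in $H$ yields $(n/(2v(H)))^2$ pair-disjoint copies of $D_3$ in $G$, producing at least $\varepsilon n^2$ pair-disjoint induced copies altogether. The main obstacle I expect is the careful design of the breaking edges: they must be doubled so as not to introduce new single edges (which would create new candidate positions for $a_3$ in $V_1$ or $V_2$), and they must not accidentally fall on the non-edge $(v_1, v_2)$ of any planted copy---and it is precisely to exclude this latter coincidence that the Behrend $3$-AP-freeness of $S$ is used.
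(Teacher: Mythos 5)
Your proof is correct, and the construction is a genuine variant of the paper's. Both proofs plant the copies $D_{x,s}$ on $(x, x+s, x+2s)$ with $S$ a Behrend $3$-AP-free set and then blow up using Lemma~\ref{lem:design}; the difference is in how non-planted pairs of $H$ are filled so that planted copies are the only induced copies. The paper places exactly one (single) directed edge on every non-planted pair --- in particular making each $V_i$ a tournament, and orienting all non-planted $V_1$--$V_3$ pairs from $V_3$ to $V_1$. Consequently double edges and non-edges are \emph{both} confined to planted positions, which pins $a_2 \in V_2$, $a_3 \in V_3$, $a_1 \in V_1$, forces all three pairs to be planted, and then the $3$-AP-free property of $S$ directly yields $s_1 = s_2 = s_3$. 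You instead leave non-planted pairs empty and add explicit ``breaking'' double edges between $V_1$ and $V_2$ at the pairs $(x_1, x_2 + s_2)$ arising from two distinct planted copies sharing a $V_3$-vertex. Here the location of $a_3$ is pinned by the observation that single edges enter only $V_3$; the breaking edges then kill the mixed triples, and $3$-AP-freeness is used in the complementary direction, to check that no breaking edge lands on a planted non-edge $(x, x+s)$. Both uses of $3$-AP-freeness are sound. The paper's filling is arguably a bit cleaner because it needs no separate ``breaking edges don't spoil planted copies'' verification, whereas your filling makes the role of the single edge's orientation (i.e.\ the non-injectivity of $C$) more visible, which is the right intuition here given that $C(D_3)$ is the rainbow triangle and is easily testable. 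Your constant $400$ (inherited from the $F_4$ computation) gives more than $\varepsilon n^2$ pair-disjoint copies since $v(H) = 6m$; the paper uses $144$ to make the count exactly $\varepsilon n^2$, but this is immaterial.
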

\begin{proof}
	By \cite[Lemma 3.1]{Alon}, for every $m \geq 1$, there is a set $S \subseteq [m]$ of size at least $m/e^{C\sqrt{\log m}}$ containing no solution to $s_1 + s_2 = 2s_3$ with distinct $s_1,s_2,s_3$.
	Let $m$ be the maximal integer satisfying $e^{-C\sqrt{\log m}} \geq 144\varepsilon$. It is easy to check that $m \geq (1/\varepsilon)^{\Omega(\log 1/\varepsilon)}$. Let $S \subseteq [m]$ be as above; so $|S| \geq 144\varepsilon m$.  
	Define a digraph $H$ consisting of $3$ disjoint sets $V_1,V_2,V_3$, where $V_i = [i \cdot m]$; so $v(H) = 6m$. 
	For each $x \in [m]$ and $s \in S$, add a copy $D_{x,s}$ of $D_3$ on the vertices $v_1 = x \in V_1$, $v_2 = x + s \in V_2$, $v_3 = x + 2s \in V_3$, where $v_i$ plays the role of $a_i$ for each $1 \leq i \leq 3$. 
	For all pairs of vertices $\{x,y\}$ not participating in one of these copies, put exactly one edge between $x$ and $y$, and if $x \in V_1, y \in V_3$ then direct this edge from $y$ to $x$. This way, the only edges going from $V_1$ to $V_3$ are those participating in one of the copies $D_{x,s}$. Note that, in particular, each of the sets $V_1,V_2,V_3$ spans a tournament. 
	As before, the copies $D_{x,s}$ are pair-disjoint. Their number is $m|S| \geq 144 \varepsilon m^2$.
	
	It is easy to check that every induced copy of $D_3$ in $H$ must be of the form $v_1,v_2,v_3$ with $v_i \in V_i$ playing the role of $a_i$. If $v_1,v_2,v_3$ is such a copy, then by construction there are $s_1,s_2,s_3 \in S$ with $v_2 - v_1 = s_1$, $v_3 - v_2 = s_2$ and $v_3 - v_1 = 2s_3$. So $s_1 + s_2 = 2s_3$, implying that $s_1 = s_2 = s_3$. It follows that $H$ contains at most $|S|m \leq m^2$ induced copies of $D_3$.
	
	Let $G$ be the $\frac{n}{v(H)}$-blowup of $H$, where the blowup of each $V_i$ is a tournament. Every induced copy of $D_3$ in $H$ gives rise to $(\frac{n}{2v(H)})^2$ pair-disjoint induced copies of $D_3$ in $G$, by Lemma \ref{lem:design}. Hence, $G$ contains a collection of $144\varepsilon m^2 \cdot (\frac{n}{2v(H)})^2 = \varepsilon n^2$ pair-disjoint induced copies of $D_3$. On the other hand, it is easy to see that every induced copy of $D_3$ in $G$ corresponds to an induced copy of $D_3$ in $H$, so overall $G$ has at most $m^2 \cdot (\frac{n}{v(H)})^3 \leq \frac{n^3}{m} \leq \varepsilon^{\Omega(\log1/\varepsilon)}n^3$ induced copies of $D_3$. 
\end{proof}

\end{document}